\DeclareMathOperator{\co}{\mathbb C}
\DeclareMathOperator{\ZZ}{\it (I_{n}+ZZ^{*})}
\newtheorem{theorem}{Theorem}[section]
\newtheorem{remark}{Remark}[section]
\newtheorem{proposition}{Proposition}[section]
\theoremstyle{definition}
\numberwithin{equation}{section} \setcounter{section}{-1}
\begin{document}

\title[representative coordinates]{On the Bergman representative coordinates}
\author{Żywomir Dinew}
\address{Jagiellonian University, Institute of Mathematics,
Łojasiewicza 6, 30-348 Kraków, Poland  }
\email{Zywomir.Dinew@im.uj.edu.pl}
\thanks{The author was supported  by the Polish Ministry of Science and Higher Education Grant N N201 271235.}
\subjclass[2000]{Primary 32A36, 32H10, 32H02, 32A60; Secondary 32F45, 32Q15}
\keywords{Representative coordinates, Bergman metric, geodesic
ball, Lu Qi-keng conjecture, Hermitian geometry}
\begin{abstract}
We study the set where the so-called Bergman representative coordinates (or Bergman functions) form an immersion. We provide an estimate of the size of a maximal geodesic ball with respect to the Bergman metric, contained in this set. By  concrete examples we show that these estimates are the best possible. 
\end{abstract}
\maketitle
\begin{section}{Introduction}
Bergman representative coordinates were introduced by Bergman in
\cite{MR1512585} as a tool in his program of generalizing the Riemann mapping
theorem to $\co^{n}, n>1$.

Their usefulness is based (among others) on the fact that 
biholomorphic mappings become linear when represented in these coordinates (See eg.\cite{MR1228447}).

It is hard to work with these coordinates mainly because they are
not defined globally even in the domain case, Nevertheless some remarkable results were obtained by using them.
Lu Qi-Keng \cite{MR0206990}  proved that
 any domain with complete Bergman metric of constant negative
 holomorphic sectional curvature is biholomorphic to the unit
 ball in $\co^{n}$.

The so-called Bergman representative coordinates, respective to a point $z_{0}$ are:

$$w_{i}(z)=\sum_{j=1}^{n}T^{\overline{j}i}(z_{0})\frac{\partial}{\partial \bar{\zeta_{j}}}\log {\frac{K(z,\zeta)}{K(\zeta,\zeta)}}_{|\zeta=z_{0}},$$
where $K(z,\zeta)$ is the Bergman kernel of the domain $\Omega$ and $T^{\overline{j}i}(z_{0})$ is the inverse  matrix of the matrix $(T_{i\overline{j}}(z))_{i,j=1..n}=\left(\frac{\partial ^{2}}{\partial z_{i}\partial \bar z_{j}}\log K(z,z)\right)_{i,j=1..n}$, evaluated at the point $z=z_{0}$ (we refer to section \ref{section1} for all the definitions).

Looking at the definition one immediately comes upon two issues:
Are the above expressions well defined?
 Are they indeed coordinates?

 Clearly in a small neighborhood of the point $z_{0}$ the answers to
 both questions are affirmative since $\frac{\partial (w_{1},..,w_{n})}{\partial(z_{1},..,z_{n})}_{|z=z_{0}}$ is the identity matrix.

 Concerning the first question, one immediately sees that the only
 possible obstruction which may appear is that
 $K(z,\zeta)$ may have zeros. This is the reason for which studying zeros
 of the Bergman kernel attracted so much interest. Domains for
 which the Bergman kernel is zero-free are known as domains
 satisfying the Lu Qi-Keng conjecture or just Lu Qi-Keng domains. From nowadays perspective
 it is known that virtually all (in a sense, see \cite{MR1317032}) domains are not Lu
 Qi-Keng domains. On the other hand it is clear that for fixed $z_{0}$
 (as is in our case) the zero set of the Bergman kernel will be an
 analytic set and hence $w_{i}$ are well defined almost everywhere (on an open dense subset) in
 $\Omega$. This topological information is one of the main ingredients in the Lu Qi-Keng's argument \cite{MR0206990}. On the other hand for many geometric problems just
 topological information on the domain of definition (since one already knows that there
 is no hope to define the coordinates globally in general, but a local definition is at hand)
  is not enough. One would like to know whether there are subdomains $\Omega'_{z_{0}}$ of these domains of definition, which are related not to the topology but to the geometry of $\Omega$. In particular one would like to know "how small" these
  neighborhoods of $z_{0}$ in which $w_{i}$ are well defined must be, can one control them in a reasonable way
  (e.g., with dependence on the geometry of $\Omega$)
 when the point $z_{0}$ is perturbed?

 Even if well defined, the representative coordinates would have
 been useless if they do not yield a basis of local vector-fields,
 i.e.,
\begin{equation}\label{jacnonvanishing}\det \frac{\partial (w_{1},..,w_{n})}{\partial(z_{1},..,z_{n})}\neq 0\end{equation}
 should hold in a prescribed neighborhood of $z_{0}$. Although these conditions still fail to yield ``coordinates'' in the broad sense (since one does not have the injectivity of the mapping $z\to (w_{1},w_{2},...,w_{n})^{t}$), this information will do for the purposes of this article. 

 Quite unexpectedly it occurs that the functions $w_{i}$ are well
 defined in a geodesic ball of radius that does not depend not
 only on the choice of $z_{0}$, but is also independent of $\Omega$.
 Thus we have

\begin{theorem}\label{wheredefined}
Let $\Omega\subset\subset\co^{n}$ be a bounded domain equipped with the Bergman metric. For any $z_{0}\in \Omega$ The Bergman kernel $K(z,z_{0})$ does not vanish in the geodesic ball $\{z\in\Omega: dist_\Omega(z,z_{0})<\frac{\pi}{2}\}$. 
\end{theorem}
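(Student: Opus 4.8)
The plan is to realize the Bergman metric of $\Omega$ as the pull-back, under the natural map into the projectivized Bergman space, of the Fubini–Study metric, and then to exploit that projective space is positively curved with diameter $\pi/2$. Let $H=L^2_h(\Omega)$ be the Bergman space and fix an orthonormal basis $(\phi_j)$, so that $K(z,w)=\sum_j\phi_j(z)\overline{\phi_j(w)}$. Since $0<K(z,z)=\sum_j|\phi_j(z)|^2<\infty$ for every $z\in\Omega$, the map
\[
\iota\colon\Omega\longrightarrow\mathbb P(H),\qquad \iota(z)=[(\phi_j(z))_j]
\]
is well defined; moreover $\|(\phi_j(z))_j\|^2=K(z,z)$ and $\bigl|\langle(\phi_j(z))_j,(\phi_j(z_0))_j\rangle\bigr|=|K(z,z_0)|$ (this is, up to conjugation, the coherent-state map $z\mapsto[K(\cdot,z)]$). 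Working in the affine chart determined by a basis vector $\phi_{j_0}$ with $\phi_{j_0}(z_0)\ne 0$, the Fubini–Study Kähler potential pulled back by $\iota$ is $\log\bigl(\sum_j|\phi_j/\phi_{j_0}|^2\bigr)=\log K(z,z)-\log|\phi_{j_0}|^2$, which differs from $\log K(z,z)$ by a pluriharmonic function; hence $\iota^{\ast}g_{FS}=g_B$, where $g_{FS}$ is normalized so that its geodesic distance is $d_{FS}([u],[v])=\arccos\frac{|\langle u,v\rangle|}{\|u\|\,\|v\|}\in[0,\pi/2]$. In particular $\iota$ preserves the Riemannian length of curves.

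It follows that every curve $\gamma$ joining $z_0$ to $z$ in $\Omega$ is carried by $\iota$ to a curve of the same length joining $\iota(z_0)$ to $\iota(z)$ in $\mathbb P(H)$, whence
\[
\arccos\frac{|K(z,z_0)|}{\sqrt{K(z,z)\,K(z_0,z_0)}}\;=\;d_{FS}\bigl(\iota(z),\iota(z_0)\bigr)\;\le\;\mathrm{dist}_\Omega(z,z_0).
\]
If $\mathrm{dist}_\Omega(z,z_0)<\pi/2$, this forces the left-hand side to be $<\pi/2$, i.e. $|K(z,z_0)|>0$, which is exactly the assertion of the theorem.

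Two points deserve care. The first is the normalization: one has to check that $g_B=\partial\bar\partial\log K(z,z)$ is precisely the pull-back of the Fubini–Study metric of diameter $\pi/2$ — this bookkeeping is what yields the constant $\pi/2$ and no other, and it is a short local computation, which on the unit disc can be cross-checked (there the two sides of the displayed inequality already share the same first-order Taylor expansion at $z_0$). The second, and the only genuinely delicate, point is the input that in $\mathbb P(H)$ two orthogonal points lie at distance at least $\pi/2$ — equivalently, a geodesic ball of radius $<\pi/2$ does not meet the polar hyperplane of its centre — which is where the hypothesis is consumed. This may be quoted as the classical Fubini–Study distance formula, or proved by restricting to the totally geodesic copy of $\mathbb{CP}^1$ spanned by the two points, using that the orthogonal projection $\mathbb P(H)\dashrightarrow\mathbb P(\mathrm{span}(u,v))$ is distance non-increasing while orthogonal points of that $\mathbb{CP}^1$ are at distance exactly $\pi/2$. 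The remaining ingredients — well-definedness and length-preservation of $\iota$, and the elementary kernel identities — are routine.
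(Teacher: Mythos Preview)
Your argument is correct and follows essentially the same route as the paper: the Kobayashi embedding $\iota$ into projective space over the Bergman space, the identification $\iota^{*}g_{FS}=g_{B}$, and the Fubini--Study distance inequality $\mathrm{dist}_{\Omega}(z,z_{0})\ge \arccos\frac{|K(z,z_{0})|}{\sqrt{K(z,z)K(z_{0},z_{0})}}$, from which the claim follows since $\arccos 0=\pi/2$. You supply somewhat more detail (the pluriharmonic-potential argument for the pull-back, and the totally geodesic $\mathbb{CP}^{1}$ justification of the distance formula), but the strategy is identical.
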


Here the geodesic distance is with respect to the Riemannian metric yielded by the (K\" ahlerian) Bergman metric. We note that Teorem\ref{wheredefined} is just a matter of looking from a different viewpoint at known facts.
 
 Concerning the problem of linear independency it comes out that
 \eqref{jacnonvanishing} is satisfied in a geodesic ball of radius that depends only
 on the Ricci curvature of the Bergman metric.
\begin{theorem}\label{whereinjective} Let $\Omega\subset\subset\co^{n}$ be a bounded domain equipped with the Bergman metric. Let $c\in(-\infty,n+1)$ be a global lower bound of the Ricci curvature of the Bergman metric. For any $z_{0}\in \Omega$ the mapping 
$$z\to (w_{1}(z),w_{2}(z)...,w_{n}(z))^{t}$$
is an immersion  in the geodesic ball $\{z\in\Omega: dist_\Omega(z,z_{0})<\frac{\pi}{2\sqrt{n+1-c}}\}$.
 \end{theorem}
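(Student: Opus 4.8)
The plan is to imitate the proof of Theorem~\ref{wheredefined}, but with the coherent-state embedding $z\mapsto[\mathrm{ev}_z]$ of $\Omega$ into $\mathbb P\bigl(H^2(\Omega)^{*}\bigr)$ replaced by its first prolongation, and the Bergman metric replaced by a curvature-corrected companion. I first reduce the claim to one determinant. Differentiating the definition gives $\partial w_i/\partial z_k=\sum_j T^{\overline j i}(z_0)\,\partial_{z_k}\partial_{\bar\zeta_j}\log K(z,\zeta)|_{\zeta=z_0}$, so with $M(z):=\bigl(\partial_{z_k}\partial_{\bar\zeta_j}\log K(z,\zeta)|_{\zeta=z_0}\bigr)_{k,j=1}^{n}$ one has
$$\det\frac{\partial(w_1,\dots,w_n)}{\partial(z_1,\dots,z_n)}(z)=\det\bigl(T^{\overline j i}(z_0)\bigr)\cdot\det M(z),$$
and it suffices to show $\det M(z)\neq0$ on the ball in question. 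Writing out $\partial_{z_k}\partial_{\bar\zeta_j}\log K(z,\zeta)$, clearing denominators and taking a Schur complement with respect to the entry $K(z,z_0)$ gives
$$K(z,z_0)^{\,n+1}\det M(z)=H(z,z_0),\qquad H(z,\zeta):=\det\Bigl(\partial_{z}^{\alpha}\partial_{\bar\zeta}^{\beta}K(z,\zeta)\Bigr)_{|\alpha|,|\beta|\le 1},$$
the $(n+1)\times(n+1)$ determinant of the $1$-jets of the Bergman kernel. So it is enough to prove $H(z,z_0)\neq0$; this simultaneously recovers $K(z,z_0)\neq0$, hence that the $w_i$ are defined.

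The point is that $H$ is a reproducing-type kernel. Put $\mathcal H=H^2(\Omega)$, let $\mathrm{ev}_z\in\mathcal H^{*}$ be the evaluation functional (bounded, together with its derivatives, by interior estimates), and set $\Theta(z):=\mathrm{ev}_z\wedge\partial_1\mathrm{ev}_z\wedge\dots\wedge\partial_n\mathrm{ev}_z\in\Lambda^{n+1}\mathcal H^{*}$. Non-degeneracy of the Bergman metric makes these $1$-jets linearly independent, so $\iota_1:z\mapsto[\Theta(z)]$ is a well-defined holomorphic map $\Omega\to\mathbb P(\Lambda^{n+1}\mathcal H^{*})$; moreover $H(z,\zeta)=\langle\Theta(z),\Theta(\zeta)\rangle$, and the same Schur complement on the diagonal gives the Gram identity
$$\|\Theta(z)\|^{2}=H(z,z)=K(z,z)^{\,n+1}\det\bigl(T_{i\bar j}(z)\bigr).$$
Consequently, with the Fubini--Study metric $\omega_{FS}$ normalised by the potential $\log\|\cdot\|^{2}$ (as for Theorem~\ref{wheredefined}), the pull-back $\iota_1^{*}\omega_{FS}$ has K\"ahler potential $\log\|\Theta(z)\|^{2}=(n+1)\log K(z,z)+\log\det(T_{i\bar j}(z))$, i.e.
$$\iota_1^{*}\omega_{FS}=(n+1)\,\omega_{B}-\operatorname{Ric}(\omega_{B}),\qquad\operatorname{Ric}(\omega_{B})=-i\partial\bar\partial\log\det(T_{i\bar j}),$$
where $\omega_B$ denotes the Bergman K\"ahler form.

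Now I run the argument of Theorem~\ref{wheredefined} for $\iota_1$ in place of $\iota$. Since $\operatorname{Ric}(\omega_B)\ge c\,\omega_B$ and $\iota_1^{*}\omega_{FS}\ge0$ (a pull-back of a K\"ahler form by a holomorphic map), the identity above yields $0\le\iota_1^{*}\omega_{FS}\le(n+1-c)\,\omega_B$, so $\operatorname{length}_{FS}(\iota_1\circ\gamma)\le\sqrt{n+1-c}\,\operatorname{length}_{B}(\gamma)$ for every curve $\gamma$. Taking $\gamma$ a curve in $\Omega$ from $z_0$ to $z$ of Bergman length less than $\tfrac{\pi}{2\sqrt{n+1-c}}$ (possible precisely when $\mathrm{dist}_\Omega(z,z_0)<\tfrac{\pi}{2\sqrt{n+1-c}}$),
$$d_{FS}\bigl(\iota_1(z),\iota_1(z_0)\bigr)\le\operatorname{length}_{FS}(\iota_1\circ\gamma)<\sqrt{n+1-c}\cdot\tfrac{\pi}{2\sqrt{n+1-c}}=\tfrac{\pi}{2}.$$
By the elementary fact about the Fubini--Study metric that underlies Theorem~\ref{wheredefined} --- two points of a projective Hilbert space at Fubini--Study distance less than $\pi/2$ are represented by non-orthogonal vectors --- this forces $H(z,z_0)=\langle\Theta(z),\Theta(z_0)\rangle\neq0$, hence $\det M(z)\neq0$, which is the immersion statement.

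The genuinely new ingredient --- and the step I expect to cost the most work --- is the identity $\iota_1^{*}\omega_{FS}=(n+1)\omega_{B}-\operatorname{Ric}(\omega_{B})$: both underlying determinant computations, namely $K(z,z_0)^{n+1}\det M(z)=H(z,z_0)$ and $\|\Theta(z)\|^{2}=K(z,z)^{n+1}\det(T_{i\bar j}(z))$, are elementary but must be carried out carefully to land the exponent $n+1$ and the full Ricci term; everything after that is the Fubini--Study distance estimate already used for Theorem~\ref{wheredefined}. Note that the hypothesis $c<n+1$ is used exactly here: it is what makes $n+1-c>0$, so that $(n+1-c)\omega_B$ is a genuine (positive) comparison metric and the radius finite --- consistently with the fact, visible from $\iota_1^{*}\omega_{FS}\ge0$, that $\operatorname{Ric}(\omega_B)\le(n+1)\omega_B$ always holds.
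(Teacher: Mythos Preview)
Your proof is correct and follows the same overall strategy as the paper: construct a holomorphic map from $\Omega$ into an infinite-dimensional projective space whose Fubini--Study pullback is $(n+1)\,\omega_B-\mathrm{Ric}(\omega_B)$, identify the non-immersion locus of the representative coordinates with the orthogonality locus of the image points, and invoke the $\pi/2$ principle for Fubini--Study distance. The paper realises this map as the composition of Lu Qi-Keng's embedding $\iota_{\text{\it Lu},\varphi}:\Omega\to\mathbb F(n,\infty)$ into the infinite-dimensional Grassmannian with the Pl\"ucker embedding, and devotes most of its effort to verifying the infinite-dimensional Pl\"ucker pullback identity $\iota_{\text{\it Pl\"u}}^{*}\omega_{FS}=\tilde\omega_{FS}$; you instead go directly to $\mathbb P(\Lambda^{n+1}\mathcal H^{*})$ via the $(n{+}1)$-fold wedge $\Theta(z)=\mathrm{ev}_z\wedge\partial_1\mathrm{ev}_z\wedge\cdots\wedge\partial_n\mathrm{ev}_z$ and read off the K\"ahler potential from the Gram/Schur identity $\|\Theta(z)\|^{2}=K(z,z)^{\,n+1}\det(T_{i\bar j})$. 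This bypasses the Grassmannian entirely and is a genuine streamlining of the argument. You also make explicit the length comparison $\iota_1^{*}\omega_{FS}\le(n+1-c)\,\omega_B$ that converts the bound on $\tilde{\mathrm{dist}}$ into one on $\mathrm{dist}_\Omega$---a step the paper leaves to the reader after its inequality \eqref{finalll}. One small caveat: your aside that $H(z,z_0)\neq0$ ``simultaneously recovers $K(z,z_0)\neq0$'' is not justified by the Schur identity alone (which presupposes $K\neq0$), and indeed for $n=1$ one has $H=KK_{z\bar\zeta}-K_zK_{\bar\zeta}$, which need not vanish when $K$ does; but this remark is inessential to the immersion conclusion, which only concerns points where the $w_i$ are already defined.
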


The lower bound $c$ is defined as usually as a constant for which $Ric_{i\bar j}-cT_{i\bar j}$ is a positive definite matrix.

In the theorem above we assumed that the Ricci curvature is bounded below. This is not the case in general see \cite{dinew-2009} and \cite{zwonek-2009}.
Note that the Ricci curvature is the same as the sectional, holomorphic sectional and Gaussian curvature, when the dimension is $1$.
Moreover the example from \cite{dinew-2009}, after delicate
smoothing of the boundary, where the annuli overlap, shows that
even so strong assumptions as being bounded smooth and strictly
pseudoconvex except at a single point, which is a peak point for
any reasonable algebra of holomorphic functions, are not enough to
guarantee boundedness of the Ricci curvature.

On the other hand for $\mathcal C^{2}$ (see \cite{MR2018337}) strictly pseudoconvex domains or for domains of finite type in $\co^{2}$ (this fact is not stated explicitly in the literature, for nontangential approach see \cite{MR1398092})
one has a global lower bound for the Ricci curvature of the Bergman metric.

Nevertheless for any domain $\Omega$ we have the following local substitute for
Theorem \ref{whereinjective}.

\begin{theorem}\label{whereinjective1} Let $\Omega\subset\subset\co^{n}$ be a bounded domain equipped with the Bergman metric. Let $U\subset\Omega$ be an open set for which $\underset{\stackrel  {z\in U}{X\in\co^{n}\setminus\{0\}}}{\inf}\frac{Ric_{i\bar j}(z) X_{i}\bar X_{j}}{T_{i\bar j}(z)X_{i}\bar X_{j}}>c$.  For any $z_{0}\in \Omega$ the mapping 
$$z\to (w_{1}(z),w_{2}(z)...,w_{n}(z))^{t}$$
is an immersion  in the set  $U\cap\{z\in\Omega: dist_\Omega(z,z_{0})<\frac{\pi}{2\sqrt{n+1-c}}\}$.
 \end{theorem}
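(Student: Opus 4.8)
The plan is to deduce Theorem \ref{whereinjective1} from the proof of Theorem \ref{whereinjective} by localizing the curvature hypothesis. First I would recall how $\det \frac{\partial(w_1,\dots,w_n)}{\partial(z_1,\dots,z_n)}$ is expressed in terms of the Bergman kernel data; differentiating the defining formula for $w_i$, one finds that this Jacobian is, up to a nonvanishing factor coming from the inverse matrix $T^{\bar j i}(z_0)$ and from powers of $K(z,z_0)$ (which is nonzero on the relevant ball by Theorem \ref{wheredefined}, since $\frac{\pi}{2\sqrt{n+1-c}}\le\frac{\pi}{2}$ for $c\le n$; for $c\in(n,n+1)$ one notes the ball is even smaller), a determinant of second-order logarithmic derivatives of $\log K(z,\zeta)$. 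The key analytic point, which presumably was established in the course of proving Theorem \ref{whereinjective}, is that this determinant can only vanish at a point $z$ where a certain holomorphic section (built from the $w_i$) degenerates, and that the vanishing order is controlled by an inequality of Aubin--Yau/Schwarz-lemma type relating the Bergman metric and the pullback metric, whose hypothesis is exactly a lower bound on the Ricci curvature.

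Second, I would carry out the comparison argument. Along a geodesic emanating from $z_0$, one considers the function measuring the defect of immersion and shows it satisfies a differential inequality governed by $\mathrm{Ric}_{i\bar j} - c\,T_{i\bar j}\ge 0$; integrating this inequality along geodesics of length $<\frac{\pi}{2\sqrt{n+1-c}}$ forces the defect to stay away from its critical value, hence \eqref{jacnonvanishing} holds. The only place the curvature bound enters is as a pointwise estimate at the points traversed by such geodesics. Therefore, if instead of a global bound we only assume $\inf_{z\in U,\,X\ne 0}\frac{\mathrm{Ric}_{i\bar j}(z)X_i\bar X_j}{T_{i\bar j}(z)X_i\bar X_j}>c$, the very same integration goes through verbatim for any geodesic segment that stays inside $U$, and yields non-degeneracy of the Jacobian at points of $U$ that are joined to $z_0$ by a geodesic of length $<\frac{\pi}{2\sqrt{n+1-c}}$ lying in $U$.

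Third — and this is where care is needed — I must address the fact that a point $z\in U\cap\{dist_\Omega(z,z_0)<\frac{\pi}{2\sqrt{n+1-c}}\}$ need not be connected to $z_0$ by a \emph{minimizing} geodesic contained in $U$; the minimizing geodesic may leave $U$. I expect this to be the main obstacle, and I would handle it by observing that the immersion statement is local: the estimate we actually need is only that $\det\frac{\partial(w_1,\dots,w_n)}{\partial(z_1,\dots,z_n)}(z)\ne 0$, a pointwise assertion at $z$, and the differential-inequality argument can be run \emph{backwards} from $z$ rather than forwards from $z_0$. More precisely, fix $z\in U\cap B$ with $B=\{dist_\Omega(\cdot,z_0)<\frac{\pi}{2\sqrt{n+1-c}}\}$; take a minimizing geodesic $\gamma$ from $z$ to $z_0$ (of length $<\frac{\pi}{2\sqrt{n+1-c}}$), and run the comparison estimate only as far as $\gamma$ remains in $U$. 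Since the relevant monotone quantity is already normalized (it equals the "trivial" value) at $z_0$ and the defect can only grow as one moves away, the bound at the first exit point of $\gamma$ from $U$ is no worse than the bound one would get at distance $<\frac{\pi}{2\sqrt{n+1-c}}$ under the global hypothesis with constant $c$; propagating this bound back to $z$ along the portion of $\gamma$ inside $U$ (where the localized curvature inequality is available) gives $\det\frac{\partial(w_1,\dots,w_n)}{\partial(z_1,\dots,z_n)}(z)\ne0$.

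Finally I would remark that this is genuinely a local improvement: no global structure of $\Omega$ is used beyond boundedness (which guarantees existence and positivity of the Bergman metric and the applicability of Theorem \ref{wheredefined}), and that Theorem \ref{whereinjective} is recovered by taking $U=\Omega$.
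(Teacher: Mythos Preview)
Your plan rests on a mechanism the paper never uses. There is no Aubin--Yau/Schwarz-lemma differential inequality along geodesics, no ``defect-of-immersion'' function satisfying a monotone ODE, and hence nothing to ``run backwards'' from $z$ to $z_0$. Your second and third steps are speculative: you never specify the function or the inequality it should satisfy, and it is not clear that any such quantity with the claimed monotonicity exists. As written this is a wrong guess at the structure of the argument rather than a proof.

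What the paper actually does is global and unconditional in the curvature. It composes the Lu Qi-Keng embedding of $\Omega$ into the infinite Grassmannian $\mathbb F(n,\infty)$ with the Pl\"ucker embedding into $\mathbb C\mathbb P^{\infty}$ and verifies that the pullback of $\omega_{FS}$ is the auxiliary K\"ahler form with tensor $\tilde T_{i\bar j}=(n+1)T_{i\bar j}-Ric_{i\bar j}$. The explicit formula for geodesic distance in projective space then yields, for \emph{every} pair $z,z_0\in\Omega$ and with no curvature hypothesis whatsoever,
\[
\tilde{dist}_\Omega(z,z_0)\ \ge\ \arccos\frac{\Bigl|\det\bigl(K(z,z_0)^{2}\,\tfrac{\partial^{2}}{\partial z_i\partial\bar\zeta_j}\log K(z,\zeta)\big|_{\zeta=z_0}\bigr)\Bigr|}{\sqrt{\det\bigl(K(z,z)^{2}T_{i\bar j}(z)\bigr)\,\det\bigl(K(z_0,z_0)^{2}T_{i\bar j}(z_0)\bigr)}} .
\]
Since the numerator vanishes exactly where the Jacobian of $(w_1,\dots,w_n)$ does, any failure of immersion forces $\tilde{dist}_\Omega(z,z_0)\ge\pi/2$. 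The Ricci lower bound enters only afterwards, as the elementary pointwise comparison $\tilde\beta^{2}\le(n+1-c)\,\beta^{2}$, which converts the $\tilde{dist}$-ball of radius $\pi/2$ into the $dist$-ball of radius $\pi/(2\sqrt{n+1-c})$; for Theorem~\ref{whereinjective1} this pointwise inequality is precisely what the hypothesis on $U$ supplies, and the paper then simply states that both Theorems~\ref{whereinjective} and~\ref{whereinjective1} follow. Your concern about minimizing geodesics leaving $U$ is a legitimate subtlety, but in the paper's framework it is the elementary question of comparing $\tilde\ell(\gamma)$ with $\sqrt{n+1-c}\,\ell(\gamma)$ for near-minimizing curves, not a matter of reversing a monotone ODE; your proposed ``backward'' argument does not correspond to anything in the embedding proof.
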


The proofs of both Theorem \ref{wheredefined} and Theorem \ref{whereinjective} are quite similar and consist of  using the
Kobayashi construction (see \cite{MR0112162}) of an imbedding  in a
infinite dimensional projective space and in the second case the
target is also the projective space, however one uses an imbedding
due to Lu Qi-Keng (see\cite{MR2447420}) in the infinite
dimensional Grassmannian and the Pl\"ucker imbedding afterwards.

It is also of interest whether the estimates in Theorem \ref{wheredefined} and Theorem \ref{whereinjective} are optimal (whether the radii of the geodesic balls are the maximal possible). From the point of view of Riemannian geometry the generic optimality of the radius in Theorem \ref{wheredefined} would mean that the Kobayashi embedding, restricted to a real submanifold of $\Omega$ is totally geodesic and that the cut-locus of $z_{0}$ lies outside $\{z\in \Omega : dist_{\Omega} (z,z_{0})<\frac{\pi}{2}\}$. Especially the first condition is very restrictive and hence one would expect that the radius estimate is not optimal. In spite of this we prove
\begin{theorem}\label{annulusestimate1}
 For any $\varepsilon>0$ there exists a domain $\Omega_{\varepsilon}$ such that there exists $z_{0}\in \Omega_{\varepsilon}$ for which $K(z,z_{0})$ has a zero in the geodesic ball $\{z\in \Omega_{\varepsilon} : dist_{\Omega_{\varepsilon}} (z,z_{0})<\frac{\pi}{2}+\varepsilon\}.$
\end{theorem}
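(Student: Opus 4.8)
The plan is to take $\Omega_\varepsilon$ to be an annulus $A_r=\{z\in\co:\ r<|z|<1\}$ with the radius $r$ chosen small in terms of $\varepsilon$; these are classical examples of non Lu Qi-Keng domains and everything about them is explicit. Since $\{z^k/\|z^k\|\}_{k\in\mathbb{Z}}$ is an orthonormal basis of the Bergman space of $A_r$, one has
$$K_{A_r}(z,w)=\frac{g(z\bar w)}{\pi\,z\bar w},\qquad g(t)=\frac{1}{2\log(1/r)}+\sum_{m\ge1}\frac{m}{1-r^{2m}}\bigl(t^m+r^{2m}t^{-m}\bigr),\quad r^2<|t|<1 .$$
Hence $K_{A_r}(z,w)=0$ exactly when $z\bar w$ is a zero of $g$, and the Bergman metric coefficient $T_{1\bar1}(z)=\partial_z\partial_{\bar z}\log K_{A_r}(z,z)$ is the function of $u=|z|^2$ given by $T_{1\bar1}=\dfrac{(g'+ug'')g-u(g')^2}{g^2}$ with $'=\tfrac{d}{du}$, $g=g(u)$.

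The next step is to analyse $g$ as $r\to0$. Put $\lambda=\lambda(r)=\dfrac{1}{2\log(1/r)}$, so $\lambda\to0$ while $r^2/\lambda\to0$ super-exponentially fast. Retaining the dominant contributions, $g(t)=\lambda+\dfrac{t}{(1-t)^2}+\dfrac{r^2}{t}+\cdots$, where near $t\sim-\lambda$ the omitted terms are negligible compared with each written term; the limiting function $t/(1-t)^2$ has a double zero at the origin. Using the functional equation $g(t)=g(r^2/t)$ together with a winding (Rouch\'e) argument, $g$ has exactly two zeros in $\{r^2<|t|<1\}$ — a pair exchanged by $t\mapsto r^2/t$ — and the one we keep is real, negative and satisfies $t_*=-\lambda\,(1+o(1))$. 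Setting $\rho_*=\sqrt{|t_*|}=\sqrt{\lambda}\,(1+o(1))$ (which lies in $(r,1)$ once $r$ is small), fixing an argument $\alpha_0$, and putting
$$z_0=\rho_*\,e^{i\alpha_0},\qquad w_0=-\rho_*\,e^{i\alpha_0},$$
we get $z_0,w_0\in A_r$ and $z_0\bar w_0=-\rho_*^2=t_*$, so $w_0$ is a zero of $K_{A_r}(\cdot,z_0)$.

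To bound $dist_{A_r}(z_0,w_0)$ from above I would join $z_0$ to $w_0$ by the obvious path $\gamma$: half of the circle $\{|z|=\rho_*\}$. Since $T_{1\bar1}$ depends only on $|z|^2$ it is constant along $\gamma$, so (in the normalisation of the Bergman metric used in Theorem \ref{wheredefined}) the length of $\gamma$ equals $\pi\rho_*\bigl(T_{1\bar1}(\rho_*^2)\bigr)^{1/2}$. Feeding $g(\lambda)=2\lambda(1+o(1))$, $g'(\lambda)=1+o(1)$ and $\lambda g''(\lambda)=o(1)$ into the formula for $T_{1\bar1}$ gives $|t_*|\,T_{1\bar1}(|t_*|)\to\tfrac14$, hence the length of $\gamma$ tends to $\pi/2$ as $r\to0$. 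Therefore $dist_{A_r}(z_0,w_0)\le\tfrac{\pi}{2}+o(1)$, and for $r$ small enough $dist_{A_r}(z_0,w_0)<\tfrac{\pi}{2}+\varepsilon$, so $\Omega_\varepsilon:=A_r$ works. (By Theorem \ref{wheredefined} one has $dist_{A_r}(z_0,w_0)\ge\tfrac{\pi}{2}$, so in fact this distance tends to $\tfrac{\pi}{2}$; the circle $\gamma$ is thus asymptotically almost length-minimising, though it is never an exact geodesic.)

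The main obstacle is the asymptotic bookkeeping in the last two steps: one must control the series defining $g,g',g''$ uniformly in $r$ \emph{and} at the moving base point $u\sim\lambda\to0$ — the delicate point being that the negative-power part $\sum_m\frac{m}{1-r^{2m}}r^{2m}u^{-m}$ is negligible there, which is precisely where $r^2/\lambda\to0$ rapidly is used — and one must locate $t_*$ and pin down the constant $\tfrac14$ in $|t_*|\,T_{1\bar1}(|t_*|)\to\tfrac14$ accurately enough that the limiting length is \emph{exactly} the constant $\pi/2$ of Theorem \ref{wheredefined}, keeping the metric normalisation consistent throughout. A minor technical point: $z_0$ and $w_0$ must be placed on the circle through the \emph{exact} zero $t_*$ rather than through $-\lambda$, but this changes $\rho_*$ only by a factor $1+o(1)$ and does not affect the limit.
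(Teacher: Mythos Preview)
Your proposal is correct and follows essentially the same route as the paper: take a thin annulus $A_r$, locate a zero of $K(\cdot,z_0)$ at a point nearly diametrically opposite $z_0$ on the circle of radius $\sqrt{\lambda}$ with $\lambda=\tfrac{1}{2\log(1/r)}$, and show that the half-circle joining them has Bergman length tending to $\pi/2$ as $r\to0$. The only differences are in execution: the paper writes the kernel in the form \eqref{kernel} rather than your $g(z\bar w)/(\pi z\bar w)$, finds the zero by the intermediate value theorem on a real interval instead of your Rouch\'e/symmetry argument, and therefore needs a short extra segment $\gamma_1$ before the half-circle $\gamma_2$; your choice of $z_0$ and $w_0$ exactly diametrically opposite on the circle $|z|=\rho_*=\sqrt{|t_*|}$ eliminates that segment and makes the length computation slightly cleaner.
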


Concerning Theorem \ref{whereinjective} the radius is not optimal since the theorem takes into account the minimum and not the actual value of the Ricci curvature. However when the right metric is assumed then a result of optimality does also hold

\begin{theorem}\label{annulusestimate2}
 For any $\varepsilon>0$ there exists a domain $\Omega_{\varepsilon}$ such that there exists $z_{0}\in \Omega_{\varepsilon}$ for which $z\to (w_{1}(z),w_{2}(z),..,w_{n}(z))^{t}$ fails to be an immersion in the whole geodesic ball $\{z\in \Omega_{\varepsilon} : \tilde {dist}_{\Omega_{\varepsilon}} (z,z_{0})<\frac{\pi}{2}+\varepsilon\}.$
\end{theorem}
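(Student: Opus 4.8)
The plan is to work in dimension $n=1$ and to take for $\Omega_{\varepsilon}$ a suitable annulus $A_{r}:=\{z\in\co:\,r<|z|<1\}$ with $r>0$ small, and base point $z_{0}:=\sqrt r$. On an annulus the Bergman kernel is $K(z,w)=P(z\bar w)$ with $P(t)=\sum_{n\in\mathbb Z}a_{n}t^{n}$ and $a_{n}=\|z^{n}\|_{L^{2}(A_{r})}^{-2}>0$; setting $F(s):=\log P(e^{s})$ one computes, wherever $K(\cdot,z_{0})\neq0$, that
$$\frac{\partial w_{1}}{\partial z}(z)=T^{\overline{1}1}(z_{0})\,\frac{F''\bigl(\log(z\bar z_{0})\bigr)}{z\bar z_{0}},$$
so $z\mapsto w_{1}(z)$ fails to be an immersion exactly at the zeros of $F''\bigl(\log(z\bar z_{0})\bigr)$. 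The structural fact to exploit is $a_{-n-2}=r^{2n+2}a_{n}$, immediate from $\|z^{n}\|_{L^{2}(A_{r})}^{2}=\pi(1-r^{2n+2})/(n+1)$. It yields, on the one hand, the functional equation $F(2\log r-s)=F(s)+2s-2\log r$, so that $F''$ is invariant under $s\mapsto2\log r-s$; and, on the other hand, that the Fourier coefficients $b_{n}:=a_{n}r^{n}$ of $\theta\mapsto P(re^{i\theta})$ satisfy $b_{-n-2}=b_{n}$, whence $e^{i\theta}P(re^{i\theta})$ is real for every $\theta$, so that $\operatorname{Re}F'(\log r+i\theta)\equiv-1$ and $F''(\log r+i\theta)=\phi'(\theta)$ with $\phi(\theta):=\operatorname{Im}F'(\log r+i\theta)$ smooth, $2\pi$-periodic and odd (the oddness because $P$ has real coefficients). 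In particular $\phi(0)=\phi(\pi)=0$, so Rolle's theorem gives $\phi'(\theta^{*})=0$ for some $\theta^{*}\in(0,\pi)$; hence the point $z_{*}:=\sqrt r\,e^{i\theta^{*}}$ of the ``middle circle'' $\{|z|=\sqrt r\}$ of $A_{r}$ is one at which $z\mapsto w_{1}(z)$ fails to be an immersion, for every $r\in(0,1)$.

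The second ingredient is that this middle circle is a geodesic of the metric $\tilde g$ underlying $\tilde{dist}$. By the way Theorem \ref{whereinjective} is proved — through the Lu Qi-Keng imbedding into the Grassmannian and the subsequent Pl\"ucker imbedding — $\tilde g$ is (up to the fixed normalisation) the pullback of the Fubini--Study metric, namely $\tilde g_{i\bar j}=(n+1)T_{i\bar j}-Ric_{i\bar j}$, and the statement actually established there is that the mapping is an immersion on $\{z:\tilde{dist}_{\Omega}(z,z_{0})<\tfrac{\pi}{2}\}$. Being invariant under every automorphism of $A_{r}$, in particular under the anti-holomorphic involution $z\mapsto r/\bar z$ which fixes the middle circle pointwise, $\tilde g$ is rotation invariant, hence of the form $\Phi(\operatorname{Re}\xi)\,|d\xi|^{2}$ in the coordinate $\xi=\log z$ with $\Phi$ symmetric about $\operatorname{Re}\xi=\tfrac12\log r$; thus $\Phi'(\tfrac12\log r)=0$ and $\{|z|=\sqrt r\}$ is a $\tilde g$-geodesic. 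Consequently
$$\tilde{dist}_{A_{r}}(z_{0},z_{*})\le\theta^{*}\sqrt{\Phi(\tfrac12\log r)}<\pi\sqrt{\Phi(\tfrac12\log r)}=:\ell(r),$$
the $\tilde g$-length of half of the middle circle; and, since $z_{*}$ cannot belong to the $\tfrac{\pi}{2}$-ball, $\ell(r)>\tfrac{\pi}{2}$ for all small $r$.

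The heart of the proof is then the asymptotic statement
$$\liminf_{r\to0^{+}}\ell(r)=\frac{\pi}{2}.$$
Up to the normalisation constant, $\Phi(\tfrac12\log r)$ equals $r\,\tilde g_{1\bar1}(\sqrt r)=r\bigl(2T_{1\bar1}(\sqrt r)-Ric_{1\bar1}(\sqrt r)\bigr)$, which in terms of $F$ is $2F''(\log r)+(\log F'')''(\log r)$; so one needs a precise estimate of the Bergman metric and of its Ricci curvature at the midpoint of a very long annulus. I would extract it from the explicit series: because $\|z^{-1}\|_{L^{2}(A_{r})}^{2}=2\pi\log(1/r)$, the coefficient $a_{-1}$ decays only logarithmically, while the $a_{n}t^{n}$ with $n\ge0$ are $O(1)$ and those with $n\le-2$ exponentially small at $t=r$; hence $P(t)\sim a_{-1}t^{-1}$ near $t=r$, which already yields $r\,T_{1\bar1}(\sqrt r)=F''(\log r)\to0$ (while $F'(\log r)=-1$ exactly, by the functional equation). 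One then has to carry the subleading terms of $P,P',P'',P''',P''''$ at $t=r$ far enough to pin down the limit of $2F''(\log r)+(\log F'')''(\log r)$. Handling this doubly $r$-dependent expansion with a controlled error is, I expect, the main obstacle; the remaining steps are soft.

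Granting the limit, one finishes as follows. Given $\varepsilon>0$, choose $r>0$ so small that $\ell(r)<\tfrac{\pi}{2}+\varepsilon$, put $\Omega_{\varepsilon}:=A_{r}$ and keep $z_{0}=\sqrt r$. For $r$ small the middle circle has ordinary Bergman diameter $O\bigl(\sqrt{F''(\log r)}\bigr)=o(1)$, so it lies inside the Bergman $\tfrac{\pi}{2}$-ball about $z_{0}$, and by Theorem \ref{wheredefined} $K(\cdot,z_{0})$ does not vanish on it; thus the functions $w_{i}$ are well defined near $z_{*}$. Moreover $z_{*}$ lies in the geodesic ball $\{z:\tilde{dist}_{\Omega_{\varepsilon}}(z,z_{0})<\tfrac{\pi}{2}+\varepsilon\}$, and $z\mapsto(w_{1}(z),\dots,w_{n}(z))^{t}$ fails to be an immersion at $z_{*}$. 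This is exactly the assertion of the theorem.
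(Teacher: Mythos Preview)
Your strategy is different from the paper's: you place the base point on the middle circle $|z|=\sqrt r$ and exploit the exact symmetry $b_{-n-2}=b_n$, whereas the paper works at $\zeta=|2\log(r^{2})|^{-1/4}$ and uses Rouch\'e to locate the zero of \eqref{expression1} explicitly near angle $\pi/2$. Your Rolle argument for the existence of a failure point $z_{*}=\sqrt r\,e^{i\theta^{*}}$ with $\theta^{*}\in(0,\pi)$ is clean and correct, and so is the identification $\Phi(\tfrac12\log r)=2F''(\log r)+(\log F'')''(\log r)$.

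The gap is in the asymptotic. The symmetry $b_{-n-2}=b_n$ says precisely that the probability measure $p_n=b_n/\sum_m b_m$ is symmetric about $n=-1$; hence $F'(\log r)=-1$ is the mean, $F''(\log r)$ is the variance, all odd central moments (in particular $F'''(\log r)$) vanish, and $F''''(\log r)$ is the fourth cumulant $\mu_4-3\mu_2^{2}$. Since $b_{-1}=\frac{1}{2\pi r\log(1/r)}$ dominates while $b_0=b_{-2}\sim\pi^{-1}$ carries the next order, one gets
\[
F''(\log r)\sim\frac{2b_0}{b_{-1}}\sim 4r\log\tfrac1r,\qquad
\mu_4\sim\frac{2b_0}{b_{-1}}\sim 4r\log\tfrac1r,
\]
so $F''''(\log r)/F''(\log r)\to 1$ and therefore
\[
\Phi(\tfrac12\log r)=2F''(\log r)+\frac{F''''(\log r)}{F''(\log r)}-\Bigl(\frac{F'''(\log r)}{F''(\log r)}\Bigr)^{2}\longrightarrow 0+1-0=1.
\]
Thus $\ell(r)=\pi\sqrt{\Phi(\tfrac12\log r)}\to\pi$, not $\pi/2$. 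Your bound $\tilde{dist}(z_0,z_*)\le\theta^{*}\sqrt{\Phi}<\ell(r)$ then only gives a limit $\le\pi$, which is useless for optimality.

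What actually happens is that $\theta^{*}\to\pi/2$: from the approximation $P(re^{i\theta})\approx\frac{a_{-1}}{re^{i\theta}}+\frac{1}{\pi(1-re^{i\theta})^{2}}+\frac{r^{2}}{\pi(re^{i\theta}-r^{2})^{2}}$ one finds $\phi(\theta)=\operatorname{Im}F'(\log r+i\theta)\approx\frac{2\sin\theta}{L+2\cos\theta}$ with $L=(2r\log\tfrac1r)^{-1}$, hence $\phi'(\theta)=\frac{2L\cos\theta+4}{(L+2\cos\theta)^{2}}$ vanishes at $\cos\theta^{*}=-2/L\to0$. Then $\theta^{*}\sqrt{\Phi}\to(\pi/2)\cdot 1=\pi/2$, which is exactly what you need. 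But establishing $\theta^{*}\to\pi/2$ requires pinning down the zero precisely (essentially the paper's Rouch\'e step), not merely Rolle; so your ``soft'' existence argument cannot be decoupled from the hard asymptotics. Replace the claim $\liminf\ell(r)=\pi/2$ by the pair $\sqrt{\Phi}\to1$ and $\theta^{*}\to\pi/2$, and your proof goes through.
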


{\bf Acknowledgement.}
This paper was written when the author was a Junior Fellow at the Erwin Schr\"odinger Institute in Vienna. I would like to thank the institute for hospitality and perfect working conditions. I would like also to thank professor Takeo Ohsawa for turning my attention to hyperelliptic Riemann surfaces and professor Siqi Fu for pointing out the case of domains of finite type in $\co^{2}$.
\end{section}
\begin{section}{The case of a bounded domain in $\mathbb C^{n}$}\label{section1}
In this section $\Omega$ will be a bounded domain in $\co^{n}$. Let $\varphi=\{\varphi_{0},\varphi_{1},\varphi_{2}...\}$ be an orthonormal basis of the Hilbert space $\mathcal O\cap L^{2}(\Omega)$ of square-integrable holomorphic functions on $\Omega$. The Bergman kernel of $\Omega$, $K(z,w)=K_{\Omega}(z,w)$ is defined as follows
\begin{equation}\label{bkernel}
 K(z,w):=\sum_{i=0}^{\infty} \varphi_{i}(z)\overline{\varphi_{i}(w)}.
\end{equation}
With this kernel one associates a differential $(1,1)$-form,
\begin{equation}\label{bmetric}
\sqrt{-1} \sum_{i,j=1}^{n}T_{i\bar j}(z)dz_{i}\wedge d\bar z_{j}:=\sqrt{-1}\sum_{i,j=1}^{n}\frac{\partial^{2}}{\partial z_{i}\partial \bar z_{j}}\log K(z,z)dz_{i}\wedge d\bar z_{j}
\end{equation}
In our setting this form will be everywhere positive definite and moreover one easily sees that it is a K\" ahler form with global potential. The associated metric $\sum_{i,j=1}^{n}T_{i\bar j}dz_{i}d\bar z_{j}$ is called the Bergman metric and the square of the length of a vector $X$, measured in this metric at the point $z\in\Omega$ is
\begin{equation}\label{vlength}
 \beta^{2}(z,X)=\beta^{2}_{\Omega}(z,X):=\sum_{i,j=1}^{n}T_{i\bar j}(z)X_{i}\bar X_{j},
\end{equation}
for any vector $X\in\co^{n}$. One defines the length of a  piecewise $\mathcal C^{1}$ curve $$\gamma:[0,1]\ni t\to \gamma(t)\in\Omega,$$ as
\begin{equation}\label{clength}
 \ell(\gamma):=\int_{0}^{1}\beta(t,\gamma'(t))dt
\end{equation}
and the Bergman distance between two points $z,w\in\Omega$
\begin{equation}\label{bdistance}
dist_{\Omega}(z,w):=\inf\{\ell (\gamma): \gamma\text{ is a piecewise } \mathcal C^{1} \text { curve s.t. } \gamma(0)=z,\gamma (1)=w\}.
 \end{equation}
The Bergman distance is indeed a distance and hence endows $\Omega$ with the structure of a metric space.

Further let $g(z):=\det(T_{i\bar j}(z))_{i,j=1..n}$. Recall that 
\begin{equation}\label{ricci}
\sqrt{-1}\sum_{i,j=1}^{n}Ric_{i\bar j}(z)dz_{i}\wedge d\bar z_{j}:=-\sqrt{-1}\sum_{i,j=1}^{n}\frac{\partial^{2}}{\partial z_{i}\partial \bar z_{j}}\log g(z)dz_{i}\wedge d\bar z_{j} 
\end{equation}
is the Ricci form of the Bergman metric and let
\begin{equation}\label{riccimetric}
 \sqrt{-1}\sum_{i,j=1}^{n}\tilde T_{i\bar j}(z)dz_{i}\wedge d\bar z_{j}:=\sqrt{-1}\sum_{i,j=1}^{n}\left((n+1)T_{i\bar j}(z)+\frac{\partial^{2}}{\partial z_{i}\partial \bar z_{j}}\log g(z)\right)dz_{i}\wedge d\bar z_{j}.  
\end{equation}
It follows that this form is also positive definite (see eg. \cite{MR2447420}) in our setting and K\" ahler, with K\" ahler potential $\log \left(K(z,z)^{n+1}g(z)\right)$. Slightly different construction was assumed in \cite{MR0214810}. As above one defines the square of the length of a vector $\tilde\beta^{2}(z,X)$, the length of a curve $\tilde\ell(\gamma)$ and distance $\tilde {dist}_{\Omega}(z,w)$ with respect to this new K\" ahler metric.

\begin{subsection}{The Kobayashi embedding}

The Kobayashi embedding is the holomorphic embedding of the domain $\Omega$ into the projective space over the Hilbert space dual of the Hilbert space $L^{2}\cap\mathcal O(\Omega)$ which is naturally identified with the infinite dimensional projective space $\mathbb C \mathbb P^{\infty}$. The construction goes as follows.
Fix an orthonormal basis $\varphi=\{\varphi_{0},\varphi_{1},\varphi_{2}...\}$, $\varphi_{j}\in   L^{2}\cap\mathcal O(\Omega)$.
The Kobayashi embedding is the mapping 
$\iota_{\text{\it Ko},\varphi}$ defined by
$$ \Omega\ni z\to \iota_{\text{\it Ko},\varphi}(z)=[(\varphi_{0}(z),\varphi_{1}(z),\varphi_{2}(z),...)]\in \mathbb C \mathbb P^{\infty},$$
where the above notation is with respect to the homogeneous coordinates in $\mathbb C \mathbb P^{\infty}$. One easily sees that
\begin{equation}\label{blockiprojective}
 \iota_{\text{\it Ko},\varphi}(z)=[ \langle\circ,K(.,z) \rangle_{L^{2}(\Omega)}].
\end{equation}
 
What makes this construction so important is the fact that the embedding is  isometric in the sense that the pullback $\iota_{\text{\it Ko},\varphi}^{*}\omega_{FS}$ of the standard Fubini-Study metric on $\mathbb C \mathbb P^{\infty}$ is exactly the Bergman metric of $\Omega$. This combined with the formula for the distance on the projective space gives one the following inequality (see \cite{MR2139520} or \cite{wykl} and Proposition 4.1.6 therein):
\begin{equation}\label{projectivedist}
dist_{\Omega}(z,z_{0})\geq\arccos \frac{|K(z,z_{0})|}{\sqrt{K(z,z)K(z_{0},z_{0})}}
\end{equation}
It is clear that equality need not hold in \eqref{projectivedist}.

Once one has \eqref{projectivedist}, Theorem \ref{wheredefined} follows easily, since $\arccos 0=\frac{\pi}{2}$
 \end{subsection}
\begin{subsection}{The Lu Qi-Keng embedding}

The Lu Qi-Keng embedding is in some way similar to the Kobayashi embedding however the target manifold is different.

It is defined as 

$$\Omega\ni z\to\left[\begin {pmatrix}
                        \varphi_{0}\frac{\partial\varphi_{1}}{\partial z_{1}}
-\varphi_{1}\frac{\partial\varphi_{0}}{\partial z_{1}} & \varphi_{0}\frac{\partial\varphi_{2}}{\partial z_{1}}-\varphi_{2}\frac{\partial\varphi_{0}}{\partial z_{1}}&\varphi_{1}\frac{\partial\varphi_{2}}{\partial z_{1}}-\varphi_{2}\frac{\partial\varphi_{1}}{\partial z_{1}} & \cdots\\
\vdots&\vdots&\vdots &\cdots\\
  \varphi_{0}\frac{\partial\varphi_{1}}{\partial z_{n}}
-\varphi_{1}\frac{\partial\varphi_{0}}{\partial z_{n}} &  \varphi_{0}\frac{\partial\varphi_{2}}{\partial z_{n}}-\varphi_{2}\frac{\partial\varphi_{0}}{\partial z_{n}}&\varphi_{1}\frac{\partial\varphi_{2}}{\partial z_{n}}-\varphi_{2}\frac{\partial\varphi_{1}}{\partial z_{n}} & \cdots                   \end {pmatrix}
\right]\in \mathbb F(n,\infty),$$
the infinite-dimensional Grassmanian of $n$-dimensional planes. It is implicitly assumed that if
\begin{equation}\label {assumptions}
             p=\left[\begin {pmatrix}
                        p_{11}  &p_{12} &\cdots \\
\vdots &\vdots&\cdots\\
p_{n1}  &p_{n2}      &\cdots           \end {pmatrix}
\right]\in \mathbb F(n,\infty)\text{, then } \sum_{j=1}^{\infty}|p_{ij}|^2<\infty, i=1..n.\end{equation}
It is proved in \cite{MR2447420} that the pullback of the Fubini-Study metric of the Grassmanian ($\tilde\omega_{FS}$) which can be seen as the metric associated to the form
$$Tr ((I_{n}+ZZ^{*})^{-1}dZ\wedge(I+Z^{*}Z)^{-1}dZ^{*}),$$ 
in local coordinates $Z$ of the Grassmannian is
\begin{equation}\label{luqi} \iota_{\text{\it Lu},\varphi}^{*}\tilde\omega_{FS}=(n+1) T_{i\bar j}-Ric_{i\bar j},\end{equation} 
where $Ric $ is the Ricci tensor of the Bergman metric (and hence coincides with $\tilde\beta^{2}(.,\circ)$).

One has that $\tilde\omega_{FS}$ is itself the pullback via the Pl\" ucker embedding of $\omega_{FS}$
in $\mathbb C \mathbb P^{\infty}$,
\begin{equation}\label{plucker}\tilde\omega_{FS}=\iota_{\text{\it Pl\"u}}^{*}\omega_{FS}.\end{equation} 

This is almost immediate generalization of the finite-dimensional case, however the author was unable to find this result in the literature and hence a proof is provided below.
For the finite-dimensional Grassmannian this is done in \cite{MR0126808}, Satz $7$.

Without loss of generality (by a transitivity argument) one can assume that $p\in \mathbb F(n,\infty)$ lies in the subset of $\mathbb F(n,\infty)$ for which the matrix representing $p$,

$$\begin {pmatrix}
                        p_{11}  &p_{12} &\cdots \\
\vdots &\vdots&\cdots\\
p_{n1}  &p_{n2}      &\cdots           \end {pmatrix}
\text{, for } p=\left[\begin {pmatrix}
                        p_{11}  &p_{12} &\cdots \\
\vdots &\vdots&\cdots\\
p_{n1}  &p_{n2}      &\cdots           \end {pmatrix}
\right],$$
has the property that exactly the first $n\times n$ minor,
$\begin {pmatrix}
                        p_{11}\cdots p_{1n} \\
\vdots\ddots\vdots\\
p_{n1}\cdots p_{nn}                 \end {pmatrix}
$ is nonsingular. In fact every matrix representing the class $p$ will have the required property.
Let
$$Z=\begin {pmatrix}
                        z_{11}  &z_{12} &\cdots \\
\vdots &\vdots&\cdots\\
z_{n1}  &z_{n2}      &\cdots           \end {pmatrix}
:=\begin {pmatrix}
                        p_{11}\cdots p_{1n} \\
\vdots\ddots\vdots\\
p_{n1}\cdots p_{nn}                 \end {pmatrix}
^{-1}\begin {pmatrix}
                        p_{1 n+1}  &p_{1 n+2} &\cdots \\
\vdots &\vdots&\cdots\\
p_{n n+1}  &p_{n n+2}      &\cdots           \end {pmatrix}.$$
Then the matrix obtained by pairing the blocks $(I_{n}, Z)$ represents $p$, moreover a representative of this type is unique. One says that $Z$ is the local coordinate of $p$ in the neighbourhood of $\left[\begin {pmatrix}
                        1& \cdots & 0 &0&\cdots \\
\vdots &\ddots&\vdots&\vdots& \cdots\\
0& \cdots &1  &   0 &\cdots           \end {pmatrix}
\right].$

The Pl\" ucker embedding sends the vector space  spanned by the vectors $v_{1},...,v_{n}\in \co ^{\infty}$ into the element $[v_{1}\wedge\cdots \wedge v_{n}]\in P(\Lambda^{n}\co^{\infty})\cong \mathbb C\mathbb P^{\infty} $, where $P(\Lambda^{n}\co^{\infty})$ is the projectivization of $\Lambda^{n}\co^{\infty}$.

In local coordinates this reads

$$\left[\begin {pmatrix}
                        1& \cdots & 0 &z_{11} & z_{12} &\cdots \\
\vdots &\ddots&\vdots&\vdots&\vdots&\cdots\\
0& \cdots &1  &   z_{n1}&z_{n2} &\cdots           \end {pmatrix}
\right]_{\mathbb F(n,\infty)}\to[(e_{1}+z_{11}e_{n+1}+z_{12}e_{n+2}+\cdots)\wedge$$$$\wedge(e_{2}+z_{21}e_{n+1}+z_{22}e_{n+2}+\cdots)\wedge\cdots\wedge(e_{n}+z_{n1}e_{n+1}+z_{n2}e_{n+2}+\cdots)]_{P(\Lambda^{n}\co^{\infty})}=$$
$$\left[e_{1}\wedge\cdots\wedge e_{n}+\sideset{}{'}{\sum}_{\stackrel{(j_{1},j_{2},...,j_{n})\neq}{(-n+1,-(n-1)+1,...,0)}}\det \begin {pmatrix}
                        z_{1j_{1}}\cdots z_{1j_{n}} \\
\vdots\ddots\vdots\\
z_{nj_{1}}\cdots z_{nj_{n}}                 \end {pmatrix}
 e_{j_{1}+n}\wedge\cdots\wedge e_{j_{n}+n}\right],$$

where we assume $\begin {pmatrix}
                        z_{1 -n}\cdots z_{10} \\
\vdots\ddots\vdots\\
z_{n -n}\cdots z_{n 0}                 \end {pmatrix}=I_{n}$

The isomorphism of $P(\Lambda^{n}\co^{\infty})$ with $\mathbb C\mathbb P^{\infty}$ is realized by enumerating lexicographically 
$$\tilde e_{s}= e_{j_{1}(s)+n}\wedge e_{j_{2}(s)+n}\wedge\cdots\wedge e_{j_{n}(s)+n}.$$
Because $\tilde e_{0}=e_{1}\wedge...\wedge e_{n}$ the local coordinate  of the image of $p$ in $\mathbb C\mathbb P^{\infty}$ will be
$$\sum_{s=1}^{\infty}\det \begin {pmatrix}
                        z_{1j_{1}(s)}\cdots z_{1j_{n}(s)} \\
\vdots\ddots\vdots\\
z_{nj_{1}(s)}\cdots z_{nj_{n}(s)}                 \end {pmatrix}
 \tilde e_{s}.$$

The Fubini-Study metric $\omega_{FS}$ is the metric associated to $\partial\bar\partial\log (1+WW^{*})$, for $W=(w_{1},w_{2},...)$, the local coordinate of $w=[(1,w_{1},w_{2},...)]$, the line with direction  $\tilde e_{0}+\sum_{s=1}^{\infty}w_{s}\tilde e_{s}$ in $\co^{\infty}$.

The metric at the image point of $(I_{n}, Z)$ is associated to
$$\partial\bar\partial \log\left(1+\sum_{s=1}^{\infty}\left|\det\begin {pmatrix}
                        z_{1j_{1}(s)}\cdots z_{1j_{n}(s)} \\
\vdots\ddots\vdots\\
z_{nj_{1}(s)}\cdots z_{nj_{n}(s)}                 \end {pmatrix}
 \right|^{2}\right),$$
which by the Cauchy-Binet formula equals $\partial\bar\partial \log\det (I+ZZ^{*})$
(there is no problem with convergence here, by the assumption \eqref{assumptions}).

We use the well known expressions for the derivative of the determinant and the inverse matrix (all the notations are to be understood in the obvious sense).

$$\bar \partial \det A=\det A \text{\it Tr} (A^{-1}\bar\partial A) $$
$$\partial A^{-1}=-A^{-1}(\partial A)A^{-1}$$

$$\partial\bar\partial \log(\det (I+ZZ^{*}))=\partial\frac{det (I+ZZ^{*})Tr(\ZZ^{-1}\bar\partial\ZZ)}{det (I+ZZ^{*})}=$$
$$\partial Tr(\ZZ^{-1}ZdZ^{*})=Tr(-\ZZ^{-1}(\partial\ZZ)\ZZ^{-1}ZdZ^{*}+$$$$+\ZZ^{-1}dZdZ^{*})=Tr(\ZZ^{-1}dZ(I-Z^{*}\ZZ^{-1} Z)dZ^{*})).$$
What remains is to show that 
$$I-Z^{*}\ZZ^{-1} Z=(I+Z^{*}Z)^{-1}$$

Multiplying with $I+Z^{*}Z$ gives one

$$(I-Z^{*}\ZZ^{-1} Z)(I+Z^{*}Z)=I+Z^{*}Z-Z^{*}\ZZ^{-1}(Z+ZZ^{*}Z)=$$$$I+Z^{*}Z -Z^{*}\ZZ^{-1}\ZZ Z= I+Z^{*}Z-Z^{*}Z=I,$$
hence
$$Tr (I_{n}+ZZ^{*})^{-1}dZ\wedge(I+Z^{*}Z)^{-1}dZ^{*}=\partial\bar\partial \log\det (I+ZZ^{*}),$$
which proves \eqref{plucker}.

Now combining \eqref{plucker} and \eqref{luqi} one has
\begin{equation}\label{combinpullback}  (n+1) T_{i\bar j}-Ric_{i\bar j}=\iota_{\text{\it Lu},\varphi}^{*}\iota_{\text{\it Pl\"u}}^{*}\omega_{FS}. 
\end{equation}
And hence $ \iota_{\text{\it Pl\"u}}\circ \iota_{\text{\it Lu},\varphi}$ is an isometric embedding of $\Omega$ with the metric $\tilde \beta$ to $\mathbb C \mathbb P^{\infty}$ with the Fubini-Study metric. Now using the formula for the geodesic distance in $\mathbb C \mathbb P^{\infty}$ one obtains, like \eqref{projectivedist},

$$\tilde {dist} (z,\zeta)\geq $$$$\arccos\frac{|(\iota_{\text{\it Pl\"u}}\circ \iota_{\text{\it Lu},\varphi}(z))(\iota_{\text{\it Pl\"u}}\circ \iota_{\text{\it Lu},\varphi}(\zeta))^{*}|}{\sqrt{(\iota_{\text{\it Pl\"u}}\circ \iota_{\text{\it Lu},\varphi}(z))(\iota_{\text{\it Pl\"u}}\circ \iota_{\text{\it Lu},\varphi}(z))^{*}(\iota_{\text{\it Pl\"u}}\circ \iota_{\text{\it Lu},\varphi}(\zeta))(\iota_{\text{\it Pl\"u}}\circ \iota_{\text{\it Lu},\varphi}(\zeta))^{*} }}=$$
\begin{equation}\label{arccos}\arccos\frac{\left|\det \left(K(z,\zeta)^{2}\frac{\partial^{2}}{\partial z_{i}\partial \bar \zeta_{j}}\log K(z,\zeta)\right)_{i,j=1..n}\right|}{\sqrt{ \det (K(z,z)^2T_{i\bar j}(z))_{i,j=1..n}  \det (K(\zeta,\zeta)^{2}T_{i\bar j}(\zeta))_{i,j=1..n}}}.\end{equation}

In the expression for $w_{i}$ the term $T^{\bar j i}(z_{0})$ is introduced for the sake of normalization and is irrelevant when it comes to linear independency. Hence
$$z\to (w_{1}(z),w_{2}(z)...,w_{n}(z))^{t}$$
is an immerison exactly when 
$$z\to \left(\frac{\partial}{\partial \bar{\zeta_{1}}}\log {\frac{K(z,\zeta)}{K(\zeta,\zeta)}}_{|\zeta=z_{0}},\frac{\partial}{\partial \bar{\zeta_{2}}}\log {\frac{K(z,\zeta)}{K(\zeta,\zeta)}}_{|\zeta=z_{0}},...,\frac{\partial}{\partial \bar{\zeta_{n}}}\log {\frac{K(z,\zeta)}{K(\zeta,\zeta)}}_{|\zeta=z_{0}}\right)^{t}$$ 
is an immersion. The determinant of the Jacobian of the latter expression is 
\begin{equation}\label{wherebad}\det\left(\frac{\partial}{\partial z_{i}}\frac{\partial}{\partial \bar{\zeta_{j}}}\log {\frac{K(z,\zeta)}{K(\zeta,\zeta)}}_{|\zeta=z_{0}}\right)_{i,j=1..n}=\det \left(\frac{\partial^{2}}{\partial z_{j}\partial \bar{\zeta_{j}}}\log K(z,\zeta)_{|\zeta=z_{0}}\right)_{i,j=1..n}.\end{equation}

Comparing \eqref{arccos} and \eqref{wherebad} one notices that the zero-sets of the determinants are the same, with possible difference of the singular locus. Hence for fixed $z_{0}$ the nearest point $z$ for which $\frac{\partial (w_{1}(z),..,w_{n}(z))}{\partial(z_{1},..,z_{n})}=0$ must satisfy 
\begin{equation}\label{finalll}\tilde {dist} (z,z_{0})\geq \arccos 0= \frac{\pi}{2}.\end{equation}
 Theorems \ref{whereinjective} and \ref{whereinjective1} follow.

We note that the same conclusion can be obtained by directly calculating the distance on the Grassmanian, however this is technically involved, see \cite{MR1443974} for a sketch in the finite-dimensional case.
\begin{remark}\label{w1subsetw2} Let $W_{z_{0}}$ denote the set $\{z\in\Omega: K(z,z_{0})=0\}$ and $\tilde W_{z_{0}}$ denote the set $\{z\in\Omega: \det \left(K(z,z_{0})^{2}\frac{\partial^{2}}{\partial z_{i}\partial \bar \zeta_{j}}\log K(z,\zeta)_{|\zeta=z_{0}}\right)_{i,j=1..n}=0\}$. If $n>1$ one has that $W_{z_{0}}\subset \tilde W_{z_{0}}$.
\end{remark}
\begin{proof} It follows by a simple calculation that
 $$\det \left(K(z,z_{0})^{2}\frac{\partial^{2}}{\partial z_{i}\partial \bar \zeta_{j}}\log K(z,\zeta)_{|\zeta=z_{0}}\right)_{i,j=1..n}=$$$$\det \left(K(z,z_{0})\frac{\partial^{2}}{\partial z_{i}\partial \bar \zeta_{j}} K(z,\zeta)_{|\zeta=z_{0}}-\frac{\partial}{\partial z_{i}} K(z,z_{0})\frac{\partial}{\partial \bar \zeta_{j}} K(z,\zeta)_{|\zeta=z_{0}}\right)_{i,j=1..n}.$$

When $z\in W_{z_{0}}$ this reduces to 
$$\det \left(-\frac{\partial}{\partial z_{i}} K(z,z_{0})\frac{\partial}{\partial \bar \zeta_{j}} K(z,\zeta)_{|\zeta=z_{0}}\right)_{i,j=1..n}=0,$$
since the matrix is of rank 1.
\end{proof}

\end{subsection}
 \end{section}
\begin{section}{The manifold case}

The essential difference between the manifold and the domain cases is that there do not exist coordinates in the large. Moreover in the compact case there are no nonconstant holomorphic functions. Therefore one has to modify the construction of the Bergman kernel and to employ forms of top degree instead of functions.

Let $M$ be a $n$- dimensional complex manifold. The space of top degree holomorphic forms is denoted by $H^{0}(M,K_{M})$, which can also be viewed as the space of global holomorphic sections of the canonical bundle $K_{M}$ over $M$. 
One can restrict to the space $H^{0}_{(2)}(M,K_{M})=H^{0}(M,K_{M})\cap L^{2}(M,K_{M})$ of square-integrable global holomorphic forms of top degree, i.e.,
$$H^{0}_{(2)}(M,K_{M})=\{f\in H^{0}(M,K_{M}): \sqrt{-1}^{n^2}\int_{M}f\wedge\bar f<\infty\}.$$

Now $H^{0}_{(2)}(M,K_{M})$
can be equipped with an inner product 
$$H^{0}_{(2)}(M,K_{M})\ni f,g\to \sqrt{-1}^{n^2}\int_{M}f\wedge\bar g\in \co.$$
This inner product turns $H^{0}_{(2)}(M,K_{M})$ into a (possibly finite dimensional) Hilbert space. Note that if $M$ is compact $H^{0}_{(2)}(M,K_{M})\equiv H^{0}(M,K_{M})$ and $ H^{0}_{(2)}(M,K_{M})$ is finitely-dimensional.
 Let $\varphi=\{\varphi_{0},\varphi_{1},\varphi_{2}...\}$ be an orthonormal basis of $H^{0}_{(2)}(M,K_{M})$. The Bergman kernel form is 
$$K=\sum_{j>0}\varphi_{j}\wedge\bar\varphi_{j}$$ In local coordinates one can write.
$$K(z,\zeta)=K^{*}(z,\zeta)dz_{1}\wedge dz_{2}\wedge..\wedge dz_{n}\wedge d\bar \zeta_{1}\wedge \bar \zeta_{2}\wedge..\wedge d\bar \zeta_{n},$$
where $K^{*}(z,\zeta)$ is a function which is defined only locally.  
The $(1,1)$- differential form 
 \begin{equation}\label{manmetric}\partial\bar\partial \log K^{*}(z,z)\end{equation}
is however globally defined, which can be easily seen by expressing $K(z,\zeta)$ in different local coordinates. 

 One says that that the manifold $M$ has a Bergman metric if the form \eqref{manmetric} is globally strictly positive. In such a case the Bergman metric is the metric associated to \eqref{manmetric}. One can therefore define $T_{i\bar j}(z)$, $\beta(z,X)$, $dist_{M} (z,w)$,  $Ric_{i\bar j}$, $\tilde\beta(z,X)$,  and $\tilde{dist}_{M} (z,w)$ like in the previous section with the constraint that $K^{*}$ is defined only locally.

Suppose now that $M$ carries a Bergman metric. Having  the starred counterparts of functions in $\co^{n}$ one sees that the representative coordinates can also be defined at least locally. We take an open cover $\{U_{i}\}$ of $M$ subordinate to local coordinate charts. Let $z_{0}\in U_{1}$ be fixed. After probably shrinking we can arrange the sets $U_{i}\times U_{1}$ to cover $M\times U_{1}$ and in every $ U_{i}\times U_{1}$, $K(z,\zeta)$ can be expressed by
$$K(z,\zeta)=K^{*}_{U_{i}}(z,\zeta) d z_{1}\wedge..\wedge d z_{n}\wedge d\bar\zeta_{1}\wedge..\wedge d\bar\zeta_{n}.$$ In $U_{i}\times U_{1}\cap U_{j}\times U_{1}$ the change of coordinates $(z,\zeta)\stackrel{(\tilde z(.),id)}{\longrightarrow} (\tilde z(z),\zeta)$ yields 
$$K^{*}_{U_{j}}(\tilde z(z),\zeta) \frac{\partial (\tilde z(z)_{1},..,\tilde z(z)_{n})}{\partial(z_{1},..,z_{n})}=K^{*}_{U_{i}}( z,\zeta)$$

For every $s\in\{1..n\}$ one has
$$\frac{\partial}{\partial \bar \zeta_{s}}\log K^{*}_{U_{i}}( z,\zeta)=\frac{\partial}{\partial \bar \zeta_{s}}\log \left (K^{*}_{U_{j}}(\tilde z(z),\zeta) \frac{\partial (\tilde z(z)_{1},..,\tilde z(z)_{n})}{\partial(z_{1},..,z_{n})}\right)=\frac{\partial}{\partial \bar \zeta_{s}}\log K^{*}_{U_{j}}( \tilde z,\zeta).$$

Hence the expressions 
$$w_{l}(z)=\sum_{k=1}^{n}T^{\overline{k}l}(z_{0})\frac{\partial}{\partial \bar{\zeta_{k}}}\log {\frac{K^{*}_{U_{i}}(z,\zeta)}{K^{*}_{U_{1}}(\zeta,\zeta)}}_{|\zeta=z_{0}},l=1..n$$
glue up to  global functions. As in the $\co ^{n}$ case the only obstruction that can appear is that $K_{U_{i}}(z,z_{0})$ may be zero for some $z$ (this is clearly independent on the set $U_{i}$, the representation will be zero for every $U_{j}$, $z\in U_{j}$). 

We remark that representative coordinates for the Bergman metric on manifolds were previously studied in \cite{MR0466638}, however there the Bergman kernel function instead of the Bergman kernel form was used. This substantially limited the range of assumed manifolds, for example every compact manifold was excluded from consideration.

From now on the convention will be that $f^{*}$ is the local coefficient of the form $f(z)=f^{*}(z)dz_{1}\wedge dz_{2}\wedge..\wedge dz_{n}$.

Unlike the situation in $\co^{n}$, $M$ does not obviously possess Bergman metric. The necessary and sufficient conditions for $M$ to have a Bergman metric are the following:

 $\bullet$  For every $z\in M$ there exists $f\in H^{0}_{(2)}(M,K_{M})$ such that $f^{*}(z)\neq 0$ (condition A.1 in \cite{MR0112162}).

  $\bullet$ For every $z\in M$ and for every vector $X$ in the complex tangent space at $z$ there exists $g\in H^{0}_{(2)}(M,K_{M})$ such that $g^{*}(z)=0$ and $X(g^{*})(z)\neq 0$ (condition A.2 in \cite{MR0112162}).

The later condition is clearly equivalent to 

 For every $z\in M$ and for a basis  $X_{i},i=1..n$ of the complex tangent space at $z$ there exist $g_{i}\in H^{0}_{(2)}(M,K_{M})$ such that $g_{i}(z)=0$ and $X_{i}(g_{i}^{*})(z)\neq 0$.

These conditions are hard to check for an abstract complex manifold, however one immediately sees that a necessary condition for $M$ to have a Bergman metric is that the $(n,0)$- Hodge number $h^{n,0}(M)$ (or geometric genus) satisfies $h^{n,0}(M)\geq n+1$.

It turns out (see \cite{MR0112162}) that A.1 and A.2 are also necessary and sufficient conditions for $\iota_{\text{\it Ko},\varphi}$ to be an immersion (A.1 solely is necessary and sufficient for the Kobayashi mapping to be well defined), whereas for an injection one needs another condition (A.3 in \cite{MR0112162}):

$\bullet$ For every two points $z, z_{0}\in M$ there exists $h\in H^{0}_{(2)}(M,K_{M})$ such that  $h(z)=0$ and $h(z_{0})\neq 0$.

In order to carry the Lu Qi-Keng construction on manifolds, one first has to check that $\iota_{\text{\it Lu},\varphi}$  does not depend on local holomorphic coordinate changes (this is not completely obvious, since there are partial derivatives in the expression for $\iota_{\text{\it Lu},\varphi}$).

Let
$$\varphi_{j}=\varphi_{j}^{*}(\tilde z) d \tilde z_{1}\wedge..\wedge d\tilde z_{n}=\varphi_{j}^{*}(\tilde z (z))Jac\left(\frac{\partial \tilde z}{\partial z}\right) d  z_{1}\wedge..\wedge d z_{n}$$

Now

$$ \varphi^{*}_{j}( z)\frac{\partial\varphi^{*}_{k}( z)}{\partial z_{s}}
-\varphi^{*}_{k}( z)\frac{\partial\varphi^{*}_{j}(
z)}{\partial z_{s}}=$$

$$ \tilde\varphi^{*}_{j}(\tilde
z(z))Jac\left(\frac{\partial \tilde z}{\partial
z}\right)\frac{\partial\tilde\varphi^{*}_{k}(\tilde
z(z))Jac\left(\frac{\partial \tilde z}{\partial
z}\right)}{\partial z_{s}} -\tilde\varphi^{*}_{k}(\tilde
z(z))Jac\left(\frac{\partial \tilde z}{\partial
z}\right)\frac{\partial\tilde\varphi^{*}_{j}(\tilde
z(z))Jac\left(\frac{\partial \tilde z}{\partial
z}\right)}{\partial z_{s}}=$$

$$Jac\left(\frac{\partial \tilde z}{\partial z}\right)^{2}\tilde\varphi^{*}_{j}(\tilde z(z))\sum_{m=1}^{n} {\frac{\partial\tilde\varphi^{*}_{k}}{\partial \tilde z_{m}}}\frac{\partial \tilde z_{m}}{\partial z_{s}} -Jac\left(\frac{\partial \tilde z}{\partial z}\right)^{2}\tilde\varphi^{*}_{k}(\tilde z(z))\sum_{m=1}^{n} \frac{\partial\tilde\varphi^{*}_{j}}{\partial \tilde z_{m}}\frac{\partial \tilde z_{m}}{\partial z_{s}}.$$
Hence

$$\left(\varphi^{*}_{j}( z)\frac{\partial\varphi^{*}_{k}(z)}{\partial z_{s}}
-\varphi^{*}_{k}(z)\frac{\partial\varphi^{*}_{j}(
z)}{\partial z_{s}}\right)
_{\stackrel{s=1..n}{j<k}}=$$
$$Jac\left(\frac{\partial
\tilde z}{\partial z}\right)^{2}\begin{pmatrix}
                                                                                                                                                     \frac{\partial \tilde z_{1}}{\partial z_{1}}&\cdots&  \frac{\partial \tilde z_{n}}{\partial z_{1}}\\
         \vdots&\ddots&\vdots\\
     \frac{\partial \tilde z_{1}}{\partial z_{n}}&\cdots&  \frac{\partial \tilde z_{n}}{\partial z_{n}}                                                                                                                                      \end{pmatrix} \left(\tilde\varphi^{*}_{j}(\tilde z)\frac{\partial\tilde\varphi^{*}_{k}(\tilde z)}{\partial\tilde z_{s}}
-\tilde\varphi^{*}_{k}(\tilde z)\frac{\partial\tilde\varphi^{*}_{j}(\tilde
z)}{\partial\tilde
z_{s}}\right)_{\stackrel{s=1..n}{j<k}}$$
and the classes of these matrices coincide since $Jac\left(\frac{\partial \tilde z}{\partial z}\right)^{2}\begin{pmatrix}
                                                                                                                                                     \frac{\partial \tilde z_{1}}{\partial z_{1}}&\cdots&  \frac{\partial \tilde z_{n}}{\partial z_{1}}\\
         \vdots&\ddots&\vdots\\
     \frac{\partial \tilde z_{n}}{\partial z_{1}}&\cdots&  \frac{\partial \tilde z_{n}}{\partial z_{n}}                                                                                                                                      \end{pmatrix}$ is nonsingular.

A careful analysis of \cite{MR2447420} gives that the necessary and sufficient conditions for $\iota_{\text{\it Lu},\varphi}$ to be well defined are

 $\bullet$  For every $z\in M$ there exists $f\in H^{0}_{(2)}(M,K_{M})$ such that $f^{*}(z)\neq 0$ (the same as condition A.1).

 $\bullet$ For every $z\in M$ there exist $f_{1},..,f_{n}\in H^{0}_{(2)}(M,K_{M})$ such that $f^{*}_{i}(z)= 0,i=1..n$  and
$\begin{pmatrix}\frac{\partial f_{1}^{*}}{\partial z_{1}}& \cdots&\frac{\partial f_{n}^{*}}{\partial z_{1}}\\
\vdots&\ddots&\vdots\\
  \frac{\partial f_{1}^{*}}{\partial z_{n}} &\cdots& \frac{\partial f_{n}^{*}}{\partial z_{n}}
 \end{pmatrix}
$ is non-singular at $z$ (condition B.1)

The necessary and sufficient condition for $\iota_{\text{\it Lu},\varphi}$ to be an immersion, in addition to A.1 and B.1, is

$\bullet$ For every $z\in M$ there exist $g_{1},..,g_{\frac{n(n+1)}{2}}\in H^{0}_{(2)}(M,K_{M})$ (some of them probably $0$) such that $g^{*}_{i}(z)= 0,i=1..\frac{n(n+1)}{2}$,  $dg^{*}_{i}= 0,i=1..\frac{n(n+1)}{2}$ at the point $z$ and the $n\times \frac{n^{2}(n+1)}{2}$ matrix 
$$\begin{pmatrix}\frac{\partial^{2} g_{1}^{*}}{\partial z_{1}\partial z_{1}}& \cdots&\frac{\partial^{2} g_{1}^{*}}{\partial z_{n}\partial z_{1}}&\frac{\partial^{2} g_{2}^{*}}{\partial z_{1}\partial z_{1}}& \cdots&\frac{\partial^{2} g_{2}^{*}}{\partial z_{n}\partial z_{1}}& \cdots & \frac{\partial^{2} g_{\frac{n(n+1)}{2}}^{*}}{\partial z_{1}\partial z_{1}}& \cdots&\frac{\partial^{2} g_{\frac{n(n+1)}{2}}^{*}}{\partial z_{n}\partial z_{1}}\\
\vdots&\ddots&\vdots&\vdots&\ddots&\vdots&\cdots&\vdots&\ddots&\vdots\\
  \frac{\partial^{2} g_{1}^{*}}{\partial z_{1}\partial z_{n}}& \cdots&\frac{\partial^{2} g_{1}^{*}}{\partial z_{n}\partial z_{n}}&\frac{\partial^{2} g_{2}^{*}}{\partial z_{1}\partial z_{n}}& \cdots&\frac{\partial^{2} g_{2}^{*}}{\partial z_{n}\partial z_{n}}& \cdots & \frac{\partial^{2} g_{\frac{n(n+1)}{2}}^{*}}{\partial z_{1}\partial z_{n}}& \cdots&\frac{\partial^{2} g_{\frac{n(n+1)}{2}}^{*}}{\partial z_{n}\partial z_{n}}
 \end{pmatrix}
$$ has rank $n$ at $z$ (condition B.2)

Finally the necessary and sufficient condition for $\iota_{\text{\it Lu},\varphi}$ to be an injection is

$\bullet$ For every pair of distinct points $z,w\in M$ and for every nonsingular $n\times n$ matrix $(p_{ij})_{i,j=1..n}$ there exist $f,g\in H^{0}_{(2)}(M,K_{M})$ such that 

$$\left(f^{*}(z)\frac{\partial g^{*}}{\partial z_{i}}(z)-g^{*}(z)\frac{\partial f^{*}}{\partial z_{i}}(z)\right)^{t}\neq (p_{ij})_{i,j=1..n}\left(f^{*}(w)\frac{\partial g^{*}}{\partial z_{j}}(w)-g^{*}(w)\frac{\partial f^{*}}{\partial z_{j}}(w)\right)^{t},$$
as vectors i.e., the vector with $i$-th component - the left hand side is not equal to the vector with $i$-th component- the expression on the right hand side. For some $i$ the components may however be equal. (condition B.3)

\begin{proposition}\label{injinj} Let $M$ be a complex manifold of dimension $n>1$ for which the Lu Qi-Keng mapping is well defined and the Kobayashi mapping is an injection. Then the Lu Qi-Keng mapping is also an injection.
\end{proposition}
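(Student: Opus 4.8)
The plan is to realise the Lu Qi--Keng map as a \emph{refinement} of the Kobayashi map, so that injectivity of the latter automatically forces injectivity of the former, with the hypothesis $n>1$ entering at a single point. Working in a local holomorphic chart, set $\Phi(z):=(\varphi_{0}^{*}(z),\varphi_{1}^{*}(z),\dots)$ and, for $s=1,\dots,n$, $\Phi_{s}(z):=\frac{\partial}{\partial z_{s}}\Phi(z)=\left(\frac{\partial\varphi_{0}^{*}}{\partial z_{s}}(z),\frac{\partial\varphi_{1}^{*}}{\partial z_{s}}(z),\dots\right)$; these lie in $\ell^{2}$ by the Cauchy estimates for the Bergman series (this is part of the $\ell^{2}$-conditions \eqref{assumptions} implicit in well-definedness of $\iota_{\text{\it Lu},\varphi}$). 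The entry of the Lu Qi--Keng matrix in the row of $z_{s}$ and the column indexed by a pair $j<k$ is $\varphi_{j}^{*}\frac{\partial\varphi_{k}^{*}}{\partial z_{s}}-\varphi_{k}^{*}\frac{\partial\varphi_{j}^{*}}{\partial z_{s}}$, which is exactly the coefficient of $e_{j}\wedge e_{k}$ in $\Phi(z)\wedge\Phi_{s}(z)\in\bigwedge^{2}\ell^{2}$. Hence, identifying the column-index space with $\bigwedge^{2}\ell^{2}$, the point $\iota_{\text{\it Lu},\varphi}(z)\in\mathbb F(n,\infty)$ is the $n$-plane
$$\Pi(z):=\operatorname{span}\bigl(\Phi(z)\wedge\Phi_{1}(z),\dots,\Phi(z)\wedge\Phi_{n}(z)\bigr)=\Phi(z)\wedge V(z),\qquad V(z):=\operatorname{span}\bigl(\Phi(z),\Phi_{1}(z),\dots,\Phi_{n}(z)\bigr),$$
and chart-independence of this $n$-plane is precisely the computation carried out above. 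Because $\iota_{\text{\it Lu},\varphi}$ is well defined, $\Pi(z)$ is genuinely $n$-dimensional; since $\Phi(z)\neq0$ (condition A.1) and $\dim\bigl(\Phi(z)\wedge V(z)\bigr)=\dim V(z)-1$, this forces $\dim V(z)=n+1$ for every $z$.

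The crux is a linear-algebra claim: \emph{when $n>1$}, the $n$-plane $\Pi(z)$ recovers both the line $\mathbb C\,\Phi(z)$ and the space $V(z)$, via
$$\mathbb C\,\Phi(z)=\{x\in\ell^{2}:x\wedge\omega=0\ \text{for all}\ \omega\in\Pi(z)\},\qquad V(z)=\{x\in\ell^{2}:\Phi(z)\wedge x\in\Pi(z)\}.$$
Both equalities are checked inside the finite-dimensional space $V(z)+\mathbb C\,x$, so no functional-analytic issue arises. The second is immediate: $\Phi\wedge x=\Phi\wedge v'$ with $v'\in V$ gives $x-v'\in\mathbb C\,\Phi\subset V$. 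For the first, the inclusion ``$\supseteq$'' is trivial; conversely, if $x\notin V$ then $x\wedge\Phi\wedge v\ne0$ for a suitable $v\in V$ (three independent vectors), so necessarily $x\in V$, and then writing $x=\sum_{i}c_{i}v_{i}$ in a basis $\Phi=v_{0},v_{1},\dots,v_{n}$ of $V$, the vanishing of $x\wedge\Phi\wedge v_{s}$ forces $c_{i}=0$ for all $i\notin\{0,s\}$; letting $s$ run through $\{1,\dots,n\}$ and using $n\ge2$ kills every $c_{i}$ with $i\ge1$, so $x\in\mathbb C\,\Phi$. This is the sole place $n>1$ is used, and it is essential: for $n=1$ one has $\Pi(z)=\bigwedge^{2}V(z)$, which determines $V(z)$ but not $\mathbb C\,\Phi(z)$, and the proposition indeed fails in that range.

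Granting the claim, the proof ends in one line: if $\iota_{\text{\it Lu},\varphi}(z)=\iota_{\text{\it Lu},\varphi}(w)$, i.e.\ $\Pi(z)=\Pi(w)$, then the first equality gives $\mathbb C\,\Phi(z)=\mathbb C\,\Phi(w)$, i.e.\ $\iota_{\text{\it Ko},\varphi}(z)=\iota_{\text{\it Ko},\varphi}(w)$, whence $z=w$ by injectivity of $\iota_{\text{\it Ko},\varphi}$; thus $\iota_{\text{\it Lu},\varphi}$ is injective. The only genuine work is the first displayed equality of the claim; everything else is bookkeeping. (Alternatively one could argue straight from conditions A.1, A.3, B.1 and the injectivity condition B.3 for $\iota_{\text{\it Lu},\varphi}$ --- given distinct $z,w$ and a nonsingular $(p_{ij})$, use A.3 to produce a form vanishing at one of the two points but not the other, and combine it with forms supplied by B.1 to violate the equality in B.3 --- but the Grassmannian reformulation is cleaner and displays the role of $n>1$ at a glance.)
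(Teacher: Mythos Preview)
Your proof is correct and takes a genuinely different route from the paper's. The paper verifies condition B.3 directly: given distinct $z,w$ and a nonsingular $(p_{ij})$, it first uses B.1 at $z$ to manufacture a form $f=Af_{p}+Bf_{q}$ with $f^{*}(z)=0$, $\partial_{r}f^{*}(z)=0$, $\partial_{s}f^{*}(z)=1$ (this is where a nonsingular $2\times2$ minor, hence $n>1$, is used), and then case-splits on whether $f^{*}(w)$ vanishes, invoking A.2 or A.3 respectively to produce a companion $g$ that breaks the equality in B.3. Your argument instead identifies the Lu Qi--Keng image $\Pi(z)$ intrinsically as the $n$-plane $\Phi(z)\wedge V(z)\subset\bigwedge^{2}\ell^{2}$ and recovers the Kobayashi line $\mathbb C\,\Phi(z)$ from it via the annihilator $\{x:x\wedge\Pi(z)=0\}$, so that $\iota_{\text{\it Ko},\varphi}$ factors through $\iota_{\text{\it Lu},\varphi}$; this is more conceptual, makes the role of $n>1$ completely transparent (for $n=1$ the annihilator is all of $V(z)$, not the line), and avoids the somewhat ad hoc case analysis. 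The paper's proof, on the other hand, stays at the level of the stated conditions A.1--B.3 and needs no exterior-algebra bookkeeping. Your final parenthetical remark accurately sketches the paper's approach.
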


\begin{proof} Fix the points $z,w\in M$ and the matrix $(p_{ij})_{i,j=1..n}$.
 It is enough to find $f$ and $g$ satisfying B.3. Consider the condition B.1 at $z$. There must be a non-singular $2\times 2$ minor $\begin{pmatrix}\frac{\partial f_{p}^{*}}{\partial z_{r}}& \frac{\partial f_{q}^{*}}{\partial z_{r}}\\
  \frac{\partial f_{p}^{*}}{\partial z_{s}} & \frac{\partial f_{q}^{*}}{\partial z_{s}}
 \end{pmatrix}$, $p\neq q$, $r\neq s$ of the matrix in condition B.1. One can find (complex) constants $A,B$ such that $A\frac{\partial f_{p}^{*}}{\partial z_{r}}(z)+ B\frac{\partial f_{q}^{*}}{\partial z_{r}}(z)=0$ and $A\frac{\partial f_{p}^{*}}{\partial z_{s}}(z)+B \frac{\partial f_{q}^{*}}{\partial z_{s}}(z)=1$. Let $f=Af_{p}+Bf_{q}$. It is clear that $f^{*}(z)=0$ and $\frac{\partial f^{*}}{\partial z_{r}}(z)=0,\frac{\partial f^{*}}{\partial z_{s}}(z)=1$. For $f^{*}(w)$ there are two possibilities.

In case $f^{*}(w)\neq 0$ one can find (by condition A.2, following from B.1) $g\in H^{0}_{(2)}(M,K_{M})$ such that $g^{*}(w)=0$  and $X(g^{*})(w)\neq 0$ where $X=(p_{r1},p_{r2},...,p_{rn})^{t}$. The left hand side of the expression in condition B.3 is $0$ for $i=r$ and the left hand side becomes $$\sum_{j=1}^{n} p_{rj}f^{*}(w)\frac{\partial g^{*}}{\partial w_{j}}(w)=f^{*}(w)X(g^{*})(w)\neq 0.$$

 In case $f^{*}(w)= 0$ one can find (by condition A.3) $g\in H^{0}(M,K_{M})$ such that $g^{*}(w)=0$ and $g^{*}(z)\neq 0$. So $$f^{*}(z)\frac{\partial g^{*}}{\partial z_{s}}(z)-g^{*}(z)\frac{\partial f^{*}}{\partial z_{s}}(z)=-g^{*}(z)\neq 0.$$ And
$$\sum_{j=1}^{n} p_{sj}\left(f^{*}(w)\frac{\partial g^{*}}{\partial z_{j}}(w)-g^{*}(w)\frac{\partial f^{*}}{\partial z_{j}}(w)\right)=\sum_{j=1}^{n} p_{sj}0=0.$$
\end{proof}

The simplest example of a manifold for which the Kobayashi mapping is an immersion almost everywhere and not allowing the Lu Qi-Keng mapping is a compact Riemann surface of genus $2$ (That the Kobayashi construction is an immersion outside the Weierstrass points follows by \cite{MR0237769}. Note that there ``Bergman metric'' is different from our notion of Bergman metric. On the other hand every compact Riemann surface of genus $2$ is necessarily hyperelliptic and hence the Kobayashi mapping is not a global immersion). The generic non-hyperelliptic compact Riemann surface of genus $3$ is an example of a manifold for which the Kobayashi mapping is a global immersion, however the Lu Qi-Keng mapping fails to be an immersion exactly at the $24$ Weierstrass points.

Now we see that Theorems \ref{wheredefined},\ref{whereinjective} and \ref{whereinjective1} hold also for complex manifolds under the assumption that the Kobayashi (respectively Lu Qi-Keng) mapping is an immersion.
\begin{proposition}\label{diameterofcompact} Let $M$ be a compact complex manifold admitting the Bergman metric. Then $diam M\geq \frac{\pi}{2}$ where the diameter is taken with respect to the Bergman metric.
\end{proposition}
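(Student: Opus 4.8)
The plan is to exploit the Kobayashi embedding together with the distance inequality \eqref{projectivedist}, adapted to the manifold setting. Since $M$ is compact, $H^{0}_{(2)}(M,K_{M})=H^{0}(M,K_{M})$ is finite dimensional, say of dimension $N+1=h^{n,0}(M)$ (and $n\geq 1$, since otherwise $M$ is a point and there is nothing to prove). Fixing an orthonormal basis $\varphi=\{\varphi_{0},\dots,\varphi_{N}\}$, the Kobayashi mapping $\iota_{\text{\it Ko},\varphi}\colon M\to\mathbb C\mathbb P^{N}$, $z\mapsto[(\varphi^{*}_{0}(z),\dots,\varphi^{*}_{N}(z))]$, is globally well defined (as checked above, the class is independent of the local coordinates), and just as in the domain case $\iota_{\text{\it Ko},\varphi}^{*}\omega_{FS}=\partial\bar\partial\log K^{*}(z,z)$; that is, the pull-back of the Fubini--Study form is the Bergman form. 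Because by hypothesis the latter is strictly positive, $\iota_{\text{\it Ko},\varphi}$ has everywhere injective differential, hence is an immersion and in particular non-constant; and being isometric it preserves the length of every piecewise $\mathcal C^{1}$ curve, so that $dist_{M}(z,w)\geq dist_{\mathbb C\mathbb P^{N}}(\iota_{\text{\it Ko},\varphi}(z),\iota_{\text{\it Ko},\varphi}(w))$ for all $z,w\in M$, exactly as in \eqref{projectivedist}.

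Next I would fix an arbitrary $z_{0}\in M$ and choose unitary homogeneous coordinates $[w_{0}:\dots:w_{N}]$ on $\mathbb C\mathbb P^{N}$ so that $\iota_{\text{\it Ko},\varphi}(z_{0})=[1:0:\dots:0]$. Let $H=\{w_{0}=0\}$ be the associated projective hyperplane. The distance formula on $\mathbb C\mathbb P^{N}$ shows that every point of $H$ is at Fubini--Study distance $\arccos 0=\frac{\pi}{2}$ from $\iota_{\text{\it Ko},\varphi}(z_{0})$, this being moreover the diameter of $\mathbb C\mathbb P^{N}$ in the present normalization. Hence it suffices to exhibit a single point of $M$ sent by $\iota_{\text{\it Ko},\varphi}$ into $H$: for such a $w_{1}$ one gets $dist_{M}(z_{0},w_{1})\geq dist_{\mathbb C\mathbb P^{N}}(\iota_{\text{\it Ko},\varphi}(z_{0}),\iota_{\text{\it Ko},\varphi}(w_{1}))=\frac{\pi}{2}$, and therefore $diam\,M\geq\frac{\pi}{2}$.

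To produce such a point I would argue by contradiction: if $\iota_{\text{\it Ko},\varphi}(M)\cap H=\emptyset$, then the $0$-th homogeneous coordinate never vanishes on $M$, so the quotients $\varphi^{*}_{j}/\varphi^{*}_{0}$ ($j=1,\dots,N$) — which are globally defined holomorphic functions on $M$, the local Jacobian factors cancelling in the quotient — are holomorphic on all of $M$. Compactness forces each of them to be constant, whence $\iota_{\text{\it Ko},\varphi}$ is constant, contradicting the fact that it is an immersion of a manifold of positive dimension. This is really the only step requiring care: it is the manifold analogue of the classical statement that a positive-dimensional compact analytic subset of projective space meets every hyperplane, and it rests on there being no non-constant holomorphic functions on a compact complex manifold, a fact already used in the excerpt. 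Everything else is a direct transcription of the reasoning behind Theorem \ref{wheredefined}, so I expect no substantive obstacle; the subtlest bookkeeping is simply confirming that $\iota_{\text{\it Ko},\varphi}$ and its isometry property descend correctly to $M$, which is in effect the same verification already carried out above for the coordinate-independence of $\iota_{\text{\it Lu},\varphi}$.
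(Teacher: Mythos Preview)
Your argument is correct and rests on the same two ingredients as the paper's: the isometry of the Kobayashi embedding (equivalently, inequality \eqref{projectivedist}) and the absence of nonconstant global holomorphic functions on a compact complex manifold. The packaging differs slightly. The paper argues by contradiction on the diameter and invokes Theorem~\ref{wheredefined}: if $\operatorname{diam} M<\frac{\pi}{2}$ then $K(\cdot,z_{0})$ is nowhere zero, so the representative coordinate $w_{1}$ is a globally defined nonconstant holomorphic function, which is impossible. You instead work directly with the embedding, showing that $\iota_{\text{\it Ko},\varphi}(M)$ must meet the hyperplane opposite $\iota_{\text{\it Ko},\varphi}(z_{0})$ (your quotients $\varphi_{j}^{*}/\varphi_{0}^{*}$ play the role the paper assigns to $w_{1}$), and then read off the distance bound. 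Since $K(z,z_{0})=0$ is precisely the condition $\iota_{\text{\it Ko},\varphi}(z)\in H$, the two arguments are really the same statement viewed from opposite ends; your version has the minor advantage of not needing to mention the representative coordinates at all, while the paper's version ties the proposition back to the theme of the article.
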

\begin{proof}
 Fix a point $z_{0}\in M$ if $diam M <\frac{\pi}{2}$ then $M$ is contained in the geodesic ball $\{z\in M: dist_{M} (z,z_{0})<\frac{\pi}{2}\}$ and hence by Theorem \ref{wheredefined} $w_{1}$ is a globally defined nonconstant holomorphic function which clearly can not exist.
\end{proof}

\end{section}

\begin{section}{ Examples} It was Skwarczy\'nski (see\cite{MR0244512}) that first observed that for some domains $K(z,\zeta)$ has zeros, namely he proved that this is the case for the circular annulus $$P_{r}:=\{z\in\co :r<|z|<1\},r<e^{-2}.$$ Later Rosenthal (see \cite{MR0239066}) extended this result for all nondegenerate annuli by using different method. Although technically complicated the case of a planar annulus is still the easiest to study. What follows is essentially a more detailed study of the analysis in \cite{MR0244512}. Recall that the Bergman kernel of $P_{r}$ is
\begin{equation}\label{kernel} K(z,\zeta)=-\frac{1}{\pi  z\overline{\zeta} \log(r^2)}+\pi^{-1} \sum
_{j=0}^{\infty } \left(\frac{r^{2+2 j}}{\left(-r^{2+2
j}+z\overline{\zeta}\right)^2}+\frac{r^{2 j}}{\left(1-r^{2 j}
z\overline{\zeta}\right)^2}\right).\end{equation}

 Fix a positive
$\varepsilon<<1$. From now on we restrict the range of $r$ to the values for which
all the following three inequalities hold simultaneously
\begin{equation}\label{eq1}\left|\frac{1}{\log(r^2)}\right|<\varepsilon^2,\end{equation}
\begin{equation}\label{eq2}\left|r\log(r^2)\right|<\varepsilon,\end{equation}
\begin{equation}\label{eq3}
\frac{r^2}{1-r^{2}}<\varepsilon^{2}.\end{equation} It is easy to
see that all these are satisfied by all sufficiently small $r$.

 For the special choice
$$z=\frac{1}{\sqrt{|\log(r^2)|}},\zeta=\frac{-1}{(1+\varepsilon)\sqrt{|\log(r^2)|}},$$
\eqref{kernel} becomes
$$-\frac{1+\varepsilon}{\pi  \frac{1}{\log(r^2)} \log(r^2)}+\pi^{-1} \sum _{j=0}^{\infty } \left(\frac{r^{2+2 j}}{\left(-r^{2+2 j}+\frac{1}{(1+\varepsilon)\log(r^2)}\right)^2}+\frac{r^{2 j}}{\left(1-r^{2
j} \frac{1}{(1+\varepsilon)\log(r^2)}\right)^2}\right).$$ One of
course has to check that $r<|z|,|\zeta|<1$, for sufficiently small
$r$, to ensure that this special pair of points belongs to the
annulus. This is obvious. Now consequently using the negativity of
$\log(r)$, \eqref{eq2} and \eqref{eq3} one has
\begin{equation}\label{tag1} -\frac{1+\varepsilon}{\pi}+\pi^{-1} \sum _{j=0}^{\infty } \left(\frac{r^{2+2 j}(1+\varepsilon)^2(\log(r^2))^2}{\left(1-r^{2+2 j}(1+\varepsilon)\log(r^2)\right)^2}+\frac{r^{2 j}}{\left(1-r^{2
j} \frac{1}{(1+\varepsilon)\log(r^2)}\right)^2}\right)\leq\end{equation}
$$-\frac{1+\varepsilon}{\pi}+\pi^{-1} \sum _{j=0}^{\infty } \left(r^{2 j}(1+\varepsilon)^2\varepsilon^2+r^{2 j}\right)=$$
$$-\frac{1+\varepsilon}{\pi}+\frac{1}{\pi}((1+\varepsilon)^2\varepsilon^2+1)+\frac{1}{\pi} \frac{r^{2}}{1-r^2}((1+\varepsilon)^2\varepsilon^2+1)\leq$$
$$\frac{-\varepsilon+((1+\varepsilon)^2(\varepsilon^{2}+1)+1)\varepsilon^2}{\pi}.$$
Clearly this is negative for sufficiently small $\varepsilon$.

Similarly for the special choice
$$z=\frac{1}{\sqrt{|\log(r^2)|}},\zeta=\frac{-1}{(1-\varepsilon)\sqrt{|\log(r^2)|}},$$
which is also good for small $r$, \eqref{kernel} becomes
\begin{equation}\label{tag2}-\frac{1-\varepsilon}{\pi  \frac{1}{\log(r^2)} \log(r^2)}+\pi^{-1} \sum _{j=0}^{\infty } \left(\frac{r^{2+2 j}}{\left(-r^{2+2 j}+\frac{1}{(1-\varepsilon)\log(r^2)}\right)^2}+\frac{r^{2 j}}{\left(1-r^{2
j} \frac{1}{(1-\varepsilon)\log(r^2)}\right)^2}\right)=\end{equation}

$$-\frac{1-\varepsilon}{\pi}+\pi^{-1} \sum _{j=0}^{\infty } \left(\frac{r^{2+2 j}(1-\varepsilon)^2(\log(r^2))^2}{\left(1-r^{2+2 j}(1-\varepsilon)\log(r^2)\right)^2}+\frac{r^{2 j}}{\left(1-r^{2
j} \frac{1}{(1-\varepsilon)\log(r^2)}\right)^2}\right)\geq$$
$$-\frac{1-\varepsilon}{\pi}+\frac{1}{\pi}
\frac{r^{0}}{\left(1-r^{0 }
\frac{1}{(1-\varepsilon)\log(r^2)}\right)^2}\geq$$
$$-\frac{1-\varepsilon}{\pi}+\frac{1}{\pi}
\frac{(1-\varepsilon)^2}{\left(
1-\varepsilon+\varepsilon^2\right)^2},$$ by \eqref{eq1}. Now
expanding into Taylor series gives one
$\left(\frac{1-x}{1-x+x^2}\right)^2=1-2x^2+ o(x^2)$, hence our
expression is approximately
$$\frac{\varepsilon-2\varepsilon^2}{\pi}>0,$$ for sufficiently small $\varepsilon$.

So $K(z,\zeta)$ is real for $z=\frac{1}{\sqrt{|\log(r^2)|}}$ and $\zeta$
from the interval
$$\left[\frac{-1}{(1-\varepsilon)\sqrt{|\log(r^2)|}},\frac{-1}{(1+\varepsilon)\sqrt{|\log(r^2)|}}\right]$$
and has different sign on the endpoints of this interval.
Therefore it must have a zero there.

To compute the Bergman distance between $z$ and $\zeta$ one has to
find the Bergman metric first
\begin{equation}\label{metric}\beta^2(z)=\frac{\partial^2\log K(z,z)}{\partial z\partial
\overline{z}}=\frac{K(z,z)_{1\bar 1} K(z,z)-K(z,z)_{1}K(z,z)_{\bar 1}}{K(z,z)^2}.\end{equation}

Since this expression is invariant under rotations it will be
enough to compute $\beta(z)$, for
$z=\frac{1}{c\sqrt{|\log(r^2)|}}\in \mathbb R, c$ close to $1$ or
$-1$. The expressions involved in formula \eqref{metric} are

$$K(z,z)=-\frac{1}{\pi  z\overline{z} \log(r^2)}+\pi^{-1} \sum _{j=0}^{\infty } \left(\frac{r^{2+2 j}}{\left(-r^{2+2 j}+z\overline{z}\right)^2}+\frac{r^{2 j}}{\left(1-r^{2
j} z\overline{z}\right)^2}\right)$$
$$K(z,z)_{1}=\frac{1}{\pi  z^2\overline{z} \log(r^2)}+\pi^{-1} \sum _{j=0}^{\infty } \left(-\frac{2r^{2+2 j}\overline{z}}{\left(-r^{2+2 j}+z\overline{z}\right)^3}+\frac{2r^{4j}\overline{z}}{\left(1-r^{2
j} z\overline{z}\right)^3}\right)$$
$$K(z,z)_{\bar{1}}=\frac{1}{\pi  z\overline{z}^2 \log(r^2)}+\pi^{-1} \sum _{j=0}^{\infty } \left(-\frac{2r^{2+2 j}\overline{z}}{\left(-r^{2+2 j}+z\overline{z}\right)^3}+\frac{2r^{4j}\overline{z}}{\left(1-r^{2
j} z\overline{z}\right)^3}\right)$$
$$K(z,z)_{1\bar 1}=-\frac{1}{\pi  (z\overline{z})^2 \log(r^2)}+\pi ^{-1}\sum _{j=0}^{\infty } \Big(
\frac{6r^{2+2j}z\overline{z}}{\left(-r^{2+2
j}+z\overline{z}\right)^4}-
\frac{2r^{2+2j}}{\left(-r^{2+2j}+z\overline{z}\right)^3}+$$
$$\frac{6r^{6j}z\overline{z}}{\left(1-r^{2j}z\overline{z}\right)^4}+
\frac{2r^{4j}}{\left(1-r^{2j}z\overline{z}\right)^3}\Big).$$

The expressions seem complicated, however almost every summand in
the series above is negligible. To show this one proceeds similarly as in \eqref{tag1}, \eqref{tag2}.

\begin{equation}\label{ABCD}\left|\frac{Ar^{B}\left(\frac{1}{c\sqrt{|\log(r^2)|}}\right)^{D}}{\left(-r^{B}-\frac{1}{c^2\log(r^2)}\right)^F}\right|=
\left| \frac{Ar^B}{(c\sqrt{|\log(r^2)|})^{D-2F}(1+c^2\log(r^2)r^B)^F)}\right|\leq\end{equation}
$$\leq\left| \frac{Ar^B}{(c\sqrt{|\log(r^2)|})^{D-2F}(1-c^2\varepsilon)^{F}}\right|,$$
when $B>0$.

The other terms are estimated by
\begin{equation}\label{ABCDE}\left|\frac{A'r^{B'}\left(\frac{1}{c\sqrt{|\log(r^2)|}}\right)^{D'}}{ \left( 1+r^{E'}\frac{1}{c^2\log(r^2)}\right)^{F'}}\right|\leq\left|\frac{A'r^{B'}} {\left(c\sqrt{|\log(r^2)|}\right)^{D'}\left(1-\frac{\varepsilon^{2}}{c^2}\right)^{F'} }\right|.\end{equation}

Now because $r^{B}\log(r)^{E}$ tends to zero for positive $B$ and
arbitrary $E$ one can employ either of the estimates
\eqref{eq1},\eqref{eq2} and finally come with estimate of the sort
$H|r|^{G}$. The only exception is clearly when $B$ or $B'=0$ i.e.,
$j=0$,  so adding up one gets a geometric power control on the
series.

Now $$K\left(\frac{1}{c\sqrt{|\log(r^2)|}},\frac{1}{c\sqrt{|\log(r^2)|}}\right)= \frac{c^2}{\pi }+\frac{1}{\pi  \left(1+\frac{1}{ c^2
\log(r^2)}\right)^2}+o(C)=\frac{c^2+1}{\pi }+o(C),
$$
$$K_{1}=K\left(\frac{1}{c\sqrt{|\log(r^2)|}},\frac{1}{c\sqrt{|\log(r^2)|}}\right)_{\bar 1}= -\frac{ c^3 \sqrt{|\log(r^2)|}}{\pi
}+o(C),
$$
$$K\left(\frac{1}{c\sqrt{|\log(r^2)|}},\frac{1}{c\sqrt{|\log(r^2)|}}\right)_{1\bar 1}= \frac{2}{\pi  \left(1+\frac{1}{
c^2 \log(r^2)}\right)^3}-\frac{ c^4 \log(r^2)}{\pi
}+o(C)=$$$$=\frac{2-c^4\log(r^2)}{\pi   }+o(C),$$ where the convention
is $o(C)\equiv o(const)$.

Finally \eqref{metric} becomes
$$\beta=\sqrt{\frac{(c^2+1+o(C))(2-c^4\log(r^2)+o(C))- ( c^3 \sqrt{|\log(r^2)|}
+o(C))^2}{(c^2+1+o(C))^2}}.$$
 The path which approximates the
distance is as follows. First one joins $\zeta$ with the point
$\frac{-1}{\sqrt{|\log(r^2)|}}$ via a linear segment. Then this
point is joined with $z=\frac{1}{\sqrt{|\log(r^2)|}}$ via the
half-circle
$$[0,1]\ni t\to e^{(\pi-\pi t)i} \frac{1}{\sqrt{|\log(r^2)|}}.$$ The segment will be denoted by $\gamma_{1}$ and the half-circle by $\gamma_{2}$.

The geodesics of the Bergman metric in the annulus are classified
in \cite{MR709860} and one easily sees that our path is not a geodesic,
however the integral distance over it is a close enough
approximation.

The  integrals that one has to assume are
$$I_{1}:=\int_{0}^{1}\beta(\gamma_{1}(t))\left|\frac{\partial\gamma_{1}}{\partial t}(t)\right|dt$$
$$I_{2}:=\int_{0}^{1}\beta(\gamma_{2}(t))\left|\frac{\partial\gamma_{2}}{\partial t}(t)\right|dt$$

Let $\zeta=\frac{-1}{s\sqrt{|\log(r^2)|}}, s\in[1-\varepsilon,1+\varepsilon]$.
The parametrization of $\gamma_{1}$ will be

$$[0,1]\ni t\to \frac{-1}{(s+t(1-s))\sqrt{|\log(r^2)|}}.$$
After straightforward computations one obtains
$$\int_{0}^{1}\lim_{r\to 0^{+}}\beta(\gamma_{2}(t))\left|\frac{\partial\gamma_{2}}{\partial t}(t)\right|dt=$$
$$\int_{0}^{1}|s-1|\sqrt{\frac{c^4}{(c^2+1)^{2}c^2}} dt,$$
where $c=s+t(1-s)$. Now since $|s-1|<\varepsilon$ and since the
integral is clearly finite we conclude that
$$\lim_{\varepsilon\to 0^{+}}\lim_{r\to0^{+}}I_{1}=0.$$

$$\lim_{r\to 0^{+}} I_{2}(r)=\int_{0}^{1}\lim_{r\to 0^{+}}\beta(\gamma_{2}(t))\left|\frac{\partial\gamma_{2}}{\partial t}(t)\right|dt$$

In this case $c=1$, $\beta$ is constant on $\gamma_{2}$ and equals

$$\sqrt{\frac{(2+o(C))(2-\log(r^2)+o(C))- (  \sqrt{|\log(r^2)|}
+o(C))^2}{(2+o(C))^2}}.$$ Further
$\frac{\partial\gamma_{2}}{\partial t}=-\pi i e^{(\pi-\pi t)i}
\frac{1}{\sqrt{|\log(r^2)|}}$, hence
$$\beta(\gamma_{2}(t))\left|\frac{\partial\gamma_{2}}{\partial t}(t)\right|\approx\pi\sqrt{\frac{2(2-\log(r^2))+ \log(r^2)
}{4}\frac{1}{|\log(r^2)|}} \to \frac{\pi}{2},$$
when $r\to 0$.

Now the Bergman distance between $z$ and $\zeta$ when $r\to0$ is bounded between $\frac{\pi}{2}$ by Theorem \ref{wheredefined} and $I_{1}+I_{2}$ and hence also tends to $\frac{\pi}{2}$. This proves Theorem \ref{annulusestimate1}.

\bigskip

Now we provide the example proving Theorem\ref{annulusestimate2}. As above we study the circular annulus $P_{r}$. To prove non-immersivity, following \eqref{arccos} and \eqref{wherebad} one has to localize the zeros of 
$\det K^{2}(z,\zeta)\frac{\partial^{2}}{\partial z\partial\bar\zeta}\log K(z,\zeta)$, that is to say - of 
\begin{equation}\label{expression1}
K(z,\zeta)\frac{\partial^{2}}{\partial z\partial\bar\zeta} K(z,\zeta)-\frac{\partial}{\partial z} K(z,\zeta)\frac{\partial}{\partial\bar\zeta} K(z,\zeta)
 \end{equation}
Fix $0<\varepsilon<<1$. Using arguments similar to \ref{ABCD},\ref{ABCDE} one can consider only the terms not containing an $r$ to a positive power in the expansions of the above objects. That is to say

\begin{equation}\label{equat1}
 K(z,\zeta)\approx -\frac{1}{\pi}\frac{1}{z\bar\zeta\log(r^2)}+\frac{1}{\pi}\frac{1}{(1-z\bar\zeta)^{2}}
\end{equation}

\begin{equation}\label{equat2}
 \frac{\partial}{\partial z}K(z,\zeta)\approx \frac{1}{\pi}\frac{1}{z^{2}\bar\zeta\log(r^2)}+\frac{1}{\pi}\frac{2\bar\zeta}{(1-z\bar\zeta)^{3}}
\end{equation}
\begin{equation}\label{equat3}
 \frac{\partial}{\partial\bar\zeta}K(z,\zeta)\approx \frac{1}{\pi}\frac{1}{z\bar\zeta^{2}\log(r^2)}+\frac{1}{\pi}\frac{2z}{(1-z\bar\zeta)^{3}}
\end{equation}
\begin{equation}\label{equat4}
\frac{\partial^{2}}{\partial z\partial\bar\zeta} K(z,\zeta)\approx -\frac{1}{\pi}\frac{1}{z^{2}\bar\zeta^{2}\log(r^2)}+\frac{1}{\pi}\frac{6z\bar\zeta}{(1-z\bar\zeta)^{4}}+\frac{1}{\pi}\frac{2}{(1-z\bar\zeta)^{3}},
\end{equation}
of course for a suitable choice of $z$ and $\zeta$. In our case we fix $\zeta$ to be $\frac{1}{\sqrt[4]{|2\log(r^2)|}}$ and put $z=\frac{i}{\xi\sqrt[4]{|2\log(r^2)|}}$, where $\xi$ is an independent of $r$ complex variable presumably very close to $1$.

Plugging these values in the expressions \eqref{equat1},\eqref{equat2},\eqref{equat3},\eqref{equat4}, one sees that the dominant term in \eqref{equat1} will be $\frac{1}{\pi}\frac{1}{(1-z\bar\zeta)^{2}}$, i.e., $\frac{1}{\pi}\frac{1}{\left(1-\frac{i}{\xi\sqrt{|2\log(r^2)|}}\right)^{2}}\longrightarrow \frac{1}{\pi}$, when $r\to 0$. Both summands in both expressions \eqref{equat2} and \eqref{equat3} have the same asymptotic behaviour $\sim \frac{const}{\sqrt[4]{|\log(r^2)|}}\to 0$. Finally in \eqref{equat4} the first and the last terms are dominating, summing up to
$$-\frac{2\xi^{2}}{\pi}+\frac{1}{\pi}\frac{2}{\left(1-\frac{i}{\xi\sqrt{|2\log(r^2)|}}\right)^{3}}\longrightarrow \frac{2-2\xi^{2}}{\pi}.$$

Back to expression \eqref{expression1} one sees that it can be written as $F(z,\zeta)+G(z,\zeta)$, where
$$F(z,\zeta)=\frac{1}{\pi}\frac{1}{(1-z\bar\zeta)^{2}}\left[-\frac{1}{\pi}\frac{1}{z^{2}\bar\zeta^{2}\log(r^2)}+\frac{1}{\pi}\frac{2}{(1-z\bar\zeta)^{3}}\right]$$
and $G(z,\zeta)$ is the sum of all the other expressions.
By taking $\xi$ on a circle of radius $\varepsilon$ around $1$ one sees that
$$\lim_{r\to 0^{+}}\left|F\left(\frac{i}{\xi\sqrt[4]{|2\log(r^2)|}},\frac{1}{\sqrt[4]{|2\log(r^2)|}}\right)\right|=\left|\frac{2-2\xi^{2}}{\pi}\right|>0,$$
whereas
$$\lim_{r\to 0^{+}}\left|G\left(\frac{i}{\xi\sqrt[4]{|2\log(r^2)|}},\frac{1}{\sqrt[4]{|2\log(r^2)|}}\right)\right|=0.$$

By Rouch\'{e}'s theorem
$$K(z,\zeta)\frac{\partial^{2}}{\partial z\partial\bar\zeta} K(z,\zeta)-\frac{\partial}{\partial z} K(z,\zeta)\frac{\partial}{\partial\bar\zeta} K(z,\zeta)_{\Big|z=\frac{i}{\xi\sqrt[4]{|2\log(r^2)|}},\zeta=\frac{1}{\sqrt[4]{|2\log(r^2)|}}},$$
as a holomorphic function of $\xi$, has the same number of zeros in $\{z: |z-1|<\varepsilon\}$ as $$F(z,\zeta)_{\Big|z=\frac{i}{\xi\sqrt[4]{|2\log(r^2)|}},\zeta=\frac{1}{\sqrt[4]{|2\log(r^2)|}}},$$

provided that $r$ is sufficiently small. Now solving 
$$\frac{2}{\left(1-\frac{i}{\xi\sqrt{|2\log(r^2)|}}\right)^{3}}=2\xi^{2}$$ one can check that
$$\frac{i}{c}+\frac{(1+i\sqrt{3})c^3}{\sqrt[3]{12(-9ic^{8}+\sqrt{3}\sqrt{-27c^{16}-4c^{18}})}}+\frac{(1-i\sqrt{3})
\sqrt[3]{-9ic^{8}+\sqrt{3}\sqrt{-27c^{16}-4c^{18}}}}{c^{3}2\sqrt[3]{18}},$$
where $c=\sqrt{|2\log(r^{2})|}$ is a solution of this equation, which lies in $\{z: |z-1|<\varepsilon\}$ for sufficiently small $r$.

Next we sketch the rest of the proof without going into the (very technical) calculational details. Once locallized, the zeros of $\frac{\partial^{2}}{\partial z\partial \bar \zeta}\log K(z,\zeta)$ are joined by a path $\tilde \gamma_{1}\cup\tilde \gamma_{2}$, $\tilde \gamma_{1}$ being the linear segment joining $z$ with the point $\frac{i}{\sqrt[4]{|2\log(r^2)|}}$. As in the proof of Theorem\ref{annulusestimate1} the integral distance over $\tilde \gamma_{1}$ with respect to the metric $\tilde\beta(.,\circ)$ tends to $0$ when $r\to 0$ and $\varepsilon\to 0$.
 Now $\tilde \gamma_{2}$ will be the arc $$[0,1]\ni t\longrightarrow \frac{1}{\sqrt[4]{|2\log(r^2)|}}e^{\frac{1-t}{2}\pi i}.$$
Again we use the fact that $\tilde \beta$ is constant in the first variable along this arc, by conformal invariance. It is therefore enough to compute the metric tensor of $\tilde \beta$ only at the point $\frac{1}{\sqrt[4]{|2\log(r^2)|}}=\tilde \gamma_{2}(1)$.
Recall that
$$\tilde\beta^{2}(\zeta,X)=\left[2\frac{\partial^{2}}{\partial \zeta\partial \bar \zeta}K(\zeta,\zeta)+\frac{\partial^{2}}{\partial \zeta\partial \bar \zeta}\log\frac{\partial^{2}}{\partial \zeta\partial \bar \zeta}K(\zeta,\zeta)\right]|X|^2$$

and hence one has to compute

$$\frac{\partial^{2}}{\partial \zeta\partial \bar \zeta}\log \left[K(\zeta,\zeta)^2 \frac{\partial^{2}}{\partial \zeta\partial \bar \zeta}K(\zeta,\zeta)\right]=$$$$\frac{\partial^{2}}{\partial \zeta\partial \bar \zeta}\log \left[K(\zeta,\zeta)\frac{\partial^{2}}{\partial \zeta\partial \bar \zeta}K(\zeta,\zeta)-\frac{\partial}{\partial \zeta}K(\zeta,\zeta)\frac{\partial}{\partial \bar\zeta}K(\zeta,\zeta)\right]=$$
\begin{equation}\label{finall}\frac{K(\zeta,\zeta)_{1}K(\zeta,\zeta)_{\bar1}K(\zeta,\zeta)_{11}K(\zeta,\zeta)_{\bar1\bar1}}{D^2}-\frac{K(\zeta,\zeta)_{11}K(\zeta,\zeta)_{\bar1\bar1}}{D}+\end{equation}
$$\frac{K(\zeta,\zeta)K(\zeta,\zeta)_{\bar1}K(\zeta,\zeta)_{1\bar1\bar1}K(\zeta,\zeta)_{11}}{D^{2}}+\frac{K(\zeta,\zeta)K(\zeta,\zeta)_{1}K(\zeta,\zeta)_{11\bar1}K(\zeta,\zeta)_{\bar1\bar1}}{D^{2}}-$$
$$-\frac{K(\zeta,\zeta)^{2}K(\zeta,\zeta)_{1\bar1\bar1}K(\zeta,\zeta)_{11\bar1}}{D^{2}}+\frac{K(\zeta,\zeta)K(\zeta,\zeta)_{11\bar1\bar1}}{D},$$
where the denominator $D=K(\zeta,\zeta)K(\zeta,\zeta)_{1\bar1}-K(\zeta,\zeta)_{1}K(\zeta,\zeta)_{\bar1}$.
We first obtain the asymptotic of $D$. Much of the analysis from \eqref{equat1}-\eqref{equat4} can be repeated to show that
$$K(\zeta,\zeta)\approx\frac{1}{\pi}$$
$$K(\zeta,\zeta)_{1}=K(\zeta,\zeta)_{\bar1}, \text{ tends to } 0 \text{ faster than } \frac{1}{\sqrt[4]{|\log(r^2)|}}$$
$$K(\zeta,\zeta)_{1\bar 1}\approx\frac{4}{\pi}$$

The change of sign from $\approx\frac{2-2\xi^{2}}{\pi}$ to $\approx\frac{4}{\pi}$ is due to the absence of $i$ in the value of $z=\zeta$. Hence $D\to\frac{4}{\pi^{2}}$.

Similarly it can be shown that
$$K(\zeta,\zeta)_{11}=K(\zeta,\zeta)_{\bar1\bar1}\approx\frac{4}{\pi}$$
$$K(\zeta,\zeta)_{11\bar1}=K(\zeta,\zeta)_{1\bar1\bar1}\approx-\frac{4\sqrt[4]{2}\sqrt[4]{|\log(r^2)|}}{\pi}$$
$$K(\zeta,\zeta)_{11\bar1\bar1}\approx\frac{8\sqrt{2}\sqrt{|\log(r^2)|}}{\pi}$$

This gives one that only the last two terms in the expression \eqref{finall} are relevant in the asymptotic behaviour of the metric tensor $\tilde\beta$ which is
$$\approx -\frac{\left(\frac{1}{\pi}\right)^{2}\frac{(-4\sqrt[4]{2})^{2}\sqrt{|\log(r^2)|}}{\pi^{2}}}{\left(\frac{4}{\pi^{2}}\right)^{2}}+\frac{\frac{1}{\pi}\frac{8\sqrt{2}\sqrt{|\log(r^2)|}}{\pi}}{\frac{4}{\pi^{2}}}=\sqrt{2}\sqrt{|\log(r^2)|}.$$

Now $\frac{\partial}{\partial t}\tilde\gamma_{2}(t)=\frac{1}{\sqrt[4]{|2\log(r^2)|}}e^{\frac{1-t}{2}\pi i}\left(-\frac{1}{2}\pi i\right)$. Finally the integral distance over $\tilde\gamma_{2}$ with respect to $\tilde \beta$ is
\begin{equation}\label{theend}\int_{0}^{1}\sqrt{\tilde{\beta}\left(\tilde\gamma_{2}(t),\frac{\partial}{\partial t}\tilde\gamma_{2}(t)\right)}\approx\int_{0}^{1}\sqrt{\sqrt{2}\sqrt{|\log(r^2)|}\frac{1}{\sqrt{|2\log(r^2)|}}\frac{\pi^{2}}{4}}\to \frac{\pi}{2},\end{equation}
when $r\to 0$. Now by \eqref{finalll}, the estimate of the distance over $\tilde\gamma_{1}$ and \eqref{theend}, one has that   $\tilde{dist}(z,\zeta)\to\frac{\pi}{2}$, which establishes the claim.

\end{section}
\bibliographystyle{amsplain.bst}
\bibliography{bergmandistance}

\end{document}